\newtheorem{theorem}{Theorem}[section]
\newtheorem*{theorem*}{Theorem}
\newtheorem{lemma}[theorem]{Lemma}
\newtheorem{conj}[theorem]{Conjecture}
\newtheorem{proposition}[theorem]{Proposition}
\newtheorem{corollary}[theorem]{Corollary}
\newcommand{\End}{\mathrm{End}}
\newcommand{\Ext}{\mathrm{Ext}}
\newcommand{\Hom}{\mathrm{Hom}}
\newcommand{\im}{\mathrm{im}}
\newcommand{\amod}{{\text{\sf -Mod}}}
\newcommand{\amodf}{{\text{\sf -Mod}_f}}
\newcommand{\CC}{\mathbb{C}}
\newcommand{\A}{\mathcal{A}}
\newcommand{\pd}{\mathrm{pd}}
\newcommand{\har}{\mathrm{Harr}}
\newcommand{\hoch}{\mathrm{H}}
\newcommand{\id}{\mathrm{Id}}
\newcommand{\ps}{finitely unobstructed}
\newcommand{\ran}{\mathrm{Rep}_A^n}
\newcommand{\van}{V_n(A)}
\newcommand{\op}{\operatorname{op}}
\newcommand{\Set}{{\mathsf{Set}}}
\newcommand{\n}{\noindent}
\theoremstyle{definition}
\newtheorem{definition}[theorem]{Definition}
\newtheorem{example}[theorem]{Example}
\newtheorem{examples}[theorem]{Examples}
\theoremstyle{remark}
\newtheorem{remark}[theorem]{Remark}
\numberwithin{equation}{section}
\newcommand{\alg}{\mathcal{N}}
\newcommand{\calg}{\mathcal{C}}
\newcommand{\nn}{\mathcal{N}}
\begin{document}

\title{A new family of algebras whose representation schemes are smooth}

\author{Alessandro Ardizzoni\thanks{Partially supported by the research grant ``Progetti di Eccellenza 2011/2012'' from the ``Fondazione Cassa di Risparmio di Padova e Rovigo''. Member of GNSAGA.}, Federica  Galluzzi\thanks{Supported by the framework PRIN 2010/11 ``Geometria delle Variet\`a
Algebriche'', cofinanced by MIUR. Member of GNSAGA.}, Francesco Vaccarino\thanks{Supported by the Wallenberg grant. This work was set up during a visit of the last two authors to the Department of Mathematics, KTH (Stockholm, Sweden). Support by the Institut Mittag-Leffler (Djursholm, Sweden) is gratefully acknowledged.  Partially supported by the TOPDRIM project funded by the Future and Emerging Technologies program of the European Commission under Contract IST-318121 and by PRIN 2012 ``Spazi di Moduli e Teoria di Lie'' - \texttt{2012KNL88Y\_002}.}}
\date{}

\maketitle
{\it Keywords}: Noncommutative Geometry, Hochschild Cohomology,
Representation Theory.

\smallskip
\n
{\it Mathematics Subject Classification (2010)}: 14B05, 16E65 , 16S38.

\allowdisplaybreaks
\begin{abstract} We give a necessary and sufficient smoothness condition for the scheme parameterizing the $n$-dimensional representations of a finitely generated associative algebra over an algebraically closed field. % of characteristic zero.
In particular, our result implies that the points $M \in \ran (k)$ satisfying $\Ext _A ^2(M,M)=0$ are regular. This generalizes well-known results on finite-dimensional algebras to finitely generated algebras.
\end{abstract}

%\tableofcontents

\section{Introduction}\label{intro}

Let $A$ be a finitely generated  associative $k$-algebra with $k$ an algebraically closed field.
Let $V_n(A)$ be the commutative $k$-algebra representing the functor from commutative algebras to sets
\[
\calg_k\to \Set:B\mapsto \Hom_{\alg_k}(A,M_n(B))
\]
of the $n$-dimensional representations of $A$ over $B,$ (see Section \ref{ran}).
The scheme $\ran$ of the linear representations of dimension $n$ of $A$ is defined to be $\mathrm{Spec}\,V_n(A).\,$

%In this paper we study the smoothness of the scheme $\ran.$

Formally smooth (or quasi-free) algebras provide a generalization of the notion of free algebra, since they behave like a free algebra with respect to  nilpotent extensions.
The definition goes back to J. Cuntz and D. Quillen and it was inspired by the Grothendieck's definition of formal smoothness given in the commutative setting, see \cite[19.3.1]{EGA4}.
See also \cite[19]{G1} and \cite[4.1.]{LB}. For further details, see \ref{subsec:FS}.

It is well-known that if $A$ is formally smooth then $\ran$ is smooth (see \cite[Proposition 19.1.4.]{G1} and \cite[Proposition 6.3.]{LB}).
If $A$ is finite-dimensional then it is formally smooth if and only if it is hereditary (see Theorem \ref{equiv}) and, therefore $\ran$ is smooth for all $n$ if and only if $A$ is hereditary (see \cite[Proposition 1]{Bo}).

For infinite-dimensional algebras the picture is more complex, e.g. there are hereditary algebras which are not formally smooth (see Remark \ref{infi}).
It is therefore interesting to find other sufficient (or necessary) conditions on $A$ which ensure $\ran$ to be smooth.

Let $M$ be an $A$-module in $\ran(k).$ It is well-known that the linear space $\Ext^2_{A}(M,M)\,$ contains the obstructions in extending the infinitesimal deformations of $M$ to the formal ones. For this reason an algebra $A$ such that $\Ext^2_A(M,M)=0,$ for all $M\in\ran(k)$ and $n\geq 1$, will be called {\emph{\ps}}.

It has been proved by Geiss and de la Pe\~{n}a (see \cite{Ge,Ge-P}) that, when $A$ is finite-dimensional, \ps \ implies that $\ran$ is smooth.

We underline that any hereditary algebra is \ps \ but the converse is not true, e.g. the universal enveloping algebra of a finite-dimensional semisimple Lie algebra is \ps \ but not hereditary if the dimension of the underlying Lie algebra is greater than one.

The proof given in \cite{Ge,Ge-P} is based on the analysis of the local geometry of $\ran,$ and it specifically relies on the upper semicontinuity of certain dimension functions arising from the bar resolution of $A.$ As we observe in the last section of this paper, their approach remains valid if one assumes that $A$ is finitely presented or bimodule coherent.
\bigskip

We follow here a different path, namely, we study the smoothness problem via the adjunction
\begin{equation}\label{adj}
\Hom_{\calg_{k}}(V_n(A),B)\xrightarrow{\cong}\Hom_{\nn_k}(A,M_n(B))
\end{equation}

The adjunction (\ref{adj}) allows us to use the Harrison cohomology of $V_n(A)$ instead of the Hochschild cohomology of $A$.
The Harrison cohomology of a commutative $k$-algebra is the symmetric part of its Hochschild cohomology, and it has been proved by Harrison \cite{Ha} that an affine ring $R$ is regular if and only if its second Harrison cohomology vanishes.

This is our main result.

\begin{theorem*}\label{B}
Let $A$ be a finitely generated $ k$-algebra, let $f:V_n(A)\rightarrow k$ be a $k$-algebra map and let $\rho:A\rightarrow M_n(k)$ be the algebra map that corresponds to $f$ through the adjunction above. Then there is a linear embedding of $\har^2(V_n(A),{_f k})$ into $H^2 (A, {_\rho M_n(k)_\rho})$.
As a consequence, $M\in \mathrm{Rep}_{A}^{n}$ is a regular point whenever $\Ext ^2_{A}(M,M)=0$.
\end{theorem*}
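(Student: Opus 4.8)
The plan is to give both groups their classification by square--zero extensions and to move extensions back and forth across the adjunction (\ref{adj}); the embedding will be the transport map, and its injectivity will come from a single application of the adjunction.

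Recall first that $H^2(A,N)$ is the group of isomorphism classes of square--zero (Hochschild) extensions of the associative algebra $A$ by the bimodule $N$, and that, in characteristic zero, $\har^2(V_n(A),{}_f k)$ is the group of isomorphism classes of \emph{commutative} square--zero extensions of $V_n(A)$ by the module ${}_f k$ (the $\mathrm{Exalcomm}$/Andr\'e--Quillen reading of the second Harrison group). Recall also the standard isomorphism $H^2(A,{}_\rho M_n(k)_\rho)\cong\Ext^2_A(M,M)$, which follows from $M_n(k)=\End_k(M)$ as an $A$--bimodule together with $\dim_k M=n<\infty$. Granting the embedding $\Phi\colon\har^2(V_n(A),{}_f k)\hookrightarrow H^2(A,{}_\rho M_n(k)_\rho)$, the corollary is immediate: if $\Ext^2_A(M,M)=0$ the target vanishes, hence so does $\har^2(V_n(A),{}_f k)$, and Harrison's theorem (quoted above) gives that $V_n(A)$ is regular at the maximal ideal $\ker f$, i.e. that $M$ is a regular point of $\ran$.

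To construct $\Phi$, let $\eta_A\colon A\to M_n(V_n(A))$ be the universal representation, that is the unit of (\ref{adj}) adjoint to $\id_{V_n(A)}$; naturality gives $M_n(f)\circ\eta_A=\rho$. Starting from a commutative square--zero extension $0\to k\to E\xrightarrow{\pi}V_n(A)\to0$, I would apply $M_n(-)$ to get $0\to M_n(k)\to M_n(E)\xrightarrow{M_n(\pi)}M_n(V_n(A))\to0$, whose kernel is again square--zero since $M_n(k)\cdot M_n(k)=M_n(k\cdot k)=0$, and then pull this back along $\eta_A$ to obtain a square--zero extension $0\to M_n(k)\to E'\to A\to0$. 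Because $a\in A$ acts on the kernel through $M_n(f)\circ\eta_A=\rho$, its coefficient bimodule is exactly ${}_\rho M_n(k)_\rho$, and I set $\Phi([E])=[E']$. As $M_n(-)\cong M_n(k)\otimes_k(-)$ is exact on underlying $k$--modules and pullback is functorial, $\Phi$ preserves Baer sums and scalars, so it is a $k$--linear map.

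The hard part will be injectivity, and this is where the adjunction does the real work. Suppose $\Phi([E])=0$, so the extension $E'\to A$ has an algebra section $s\colon A\to E'$. Composing $s$ with the projection $E'\to M_n(E)$ produces an algebra map $\sigma\colon A\to M_n(E)$ with $M_n(\pi)\circ\sigma=\eta_A$. At exactly this point the commutativity of $E$ --- equivalently, the restriction to the Harrison part --- is indispensable: since $E$ is a commutative algebra it is an admissible target for (\ref{adj}), so $\sigma$ is adjoint to an algebra map $\tilde\sigma\colon V_n(A)\to E$, and the naturality of the adjunction converts $M_n(\pi)\circ\sigma=\eta_A$ into $\pi\circ\tilde\sigma=\id_{V_n(A)}$. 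Thus $\tilde\sigma$ splits $\pi$ as commutative algebras and $[E]=0$, proving $\Phi$ injective. I expect this step to be the crux: all of the geometry is compressed into (\ref{adj}), and the reason the theorem speaks of Harrison rather than the full Hochschild cohomology of $V_n(A)$ is precisely that only commutative extensions $E$ can be carried back through the adjunction.
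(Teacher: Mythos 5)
Your proposal is correct and follows essentially the same route as the paper's own proof: both apply the matrix functor $M_n(-)$ to a commutative square-zero extension of $V_n(A)$ by ${}_{f}k$, transport the result along the universal representation $\eta_A$ (you by pulling back the extension, the paper by precomposing the associated $2$-cocycle with $\eta_A\otimes\eta_A$), and establish injectivity by using the adjunction to convert an algebra map $A\to M_n(E)$ lifting $\eta_A$ into an algebra splitting $V_n(A)\to E$ of the original commutative extension. The only difference is presentational --- extension classes and Baer sums in your write-up versus explicit cocycle manipulations in the paper --- so the two arguments coincide in substance.
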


We have thus extended the known results on smoothness to infinite-dimensional finitely generated algebras.

We remark that the above embedding is not an isomorphism in general. We give a counterexample by using 2-Calabi Yau algebras  (Remark \ref{calabi}).

The paper goes as follows.

In paragraph \ref{ran} we recall the definition of $\ran$ as the scheme parameterizing the $n$-dimensional representations of $A.$

In Section \ref{harri} we recall the Harrison cohomology which may be seen as the commutative version of the Hochschild cohomology.
We prove that the regularity of a point in $\ran = \mathrm{Spec}(V_n(A))$ is equivalent to the vanishing of $\har^2(V_n(A),{_f k})$, for the $ k$-algebra map $f:V_n(A)\rightarrow k $ associated to the point (see Theorem \ref{teo:har}). Then Theorem \ref{thm:Hoch-Harr} shows that there is a linear embedding of $\har^2(V_n(A),{_f k})$ into $H^2 (A, {_\rho M_n(k)_\rho})$ and
as a consequence, that $M\in \mathrm{Rep}_{A}^{n}$ is a regular point whenever $\Ext ^2_{A}(M,M)=0.$

Then, as said before, by using 2-Calabi Yau algebras, we exhibit an example which shows that the above embedding is not an isomorphism.

In Section \ref{exapp} we present a list of examples and applications of the aforementioned results.
To this aim, we first recall the notions of formally smooth and  hereditary algebra. We mention the known result on the smoothness of $\ran$ when $A$ is formally smooth or hereditary to compare the notions of formally smoothness, hereditary, \ps {} and we stress the difference between the finite and the infinite-dimensional case.

Afterward, we give the definition of \ps {} algebra and we prove that if $A$ is \ps {} then $\ran $ is smooth (see  Corollary \ref{thm:ps}).

Then we produce examples of \ps \ algebras (neither hereditary nor formally smooth) whose associate representation scheme is smooth (see Example \ref{psex}).

In Section \ref{defsmooth} we study the relationships between the deformation theory of $M \in \ran(k),$ in the sense of Gerstenhaber, Geiss and de la Pe\~{n}a , and the deformation theory of $V_n(A)$ as usually defined in algebraic geometry.

In particular, by using the adjunction (\ref{adj}), we will see that there are no obstructions to the integrability of the infinitesimal deformations of $M$ if and only if $\har^2(V_n(A),{_f k})=0.$ Motivated by this fact we formulate the following conjecture.

\begin{conj}
The image of the embedding $\har^2(V_n(A),{_f k})\hookrightarrow \Ext^2(M,M)$ contains the subspace of $\Ext^2(M,M)$ of the obstructions to integrate the infinitesimal deformations of $M.$
\end{conj}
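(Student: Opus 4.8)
The plan is to read the conjecture as a naturality statement comparing two deformation problems through the adjunction \eqref{adj}. On one side I consider the commutative (Harrison) deformation functor of the closed point $f$ on $\ran=\mathrm{Spec}\,V_n(A)$, sending a local Artinian $k$-algebra $B$ with residue field $k$ to the set of local lifts of $f$ to $V_n(A)\to B$; by \eqref{adj} this is naturally identified with the framed deformation functor $\mathrm{Def}^{\mathrm{fr}}_\rho$ sending $B$ to the lifts of $\rho$ to algebra maps $A\to M_n(B)$. On the other side I consider the (Gerstenhaber) deformation functor $\mathrm{Def}_M$ of $M$ as an $A$-module, and forgetting the framing defines a morphism of functors $\Phi:\mathrm{Def}^{\mathrm{fr}}_\rho\to\mathrm{Def}_M$. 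First I would record the standard obstruction theories: $\mathrm{Def}_M$ has tangent space $\Ext^1_A(M,M)$ and obstruction space $\Ext^2_A(M,M)$, while $\mathrm{Def}^{\mathrm{fr}}_\rho$ is pro-represented by $\widehat{\mathcal O}_{\ran,f}$ and hence, in characteristic zero, has obstruction space $\har^2(V_n(A),{_f k})$ — exactly the space whose vanishing detects regularity in Theorem \ref{teo:har}.

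Next I would establish that $\Phi$ is formally smooth and surjective on tangent spaces. Surjectivity on tangents is the identification $Z^1(A,M_n(k))\twoheadrightarrow Z^1/B^1=\Ext^1_A(M,M)$ of the framed tangent space $T_f\ran$ with $\Ext^1_A(M,M)$ modulo the coboundaries arising from the $\GL_n$-action. Formal smoothness follows because any deformation of $M$ over an Artinian $B$ is $B$-flat and restricts to the free module $k^n$, hence is itself free; choosing a $B$-basis produces a framing, i.e. a lift to $\mathrm{Def}^{\mathrm{fr}}_\rho$, and bases lift along small extensions. Consequently every infinitesimal deformation $\xi\in\Ext^1_A(M,M)$ lifts to some $\tilde\xi\in Z^1(A,M_n(k))$, and, inductively, any order-$m$ integration $\eta_m$ of $\xi$ in $\mathrm{Def}_M$ lifts to an order-$m$ framed integration $\tilde\eta_m$.

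The heart of the argument is the compatibility of the two obstruction theories under $\Phi$. I would show that the linear map induced by $\Phi$ on obstruction spaces, $\phi:\har^2(V_n(A),{_f k})\to\Ext^2_A(M,M)$, coincides with the embedding of Theorem \ref{thm:Hoch-Harr}, and that it intertwines the obstruction maps: for each small extension and each framed deformation $\tilde\eta$ one has $\mathrm{ob}_{\mathrm{Def}_M}(\Phi(\tilde\eta))=\phi\bigl(\mathrm{ob}_{\mathrm{Def}^{\mathrm{fr}}_\rho}(\tilde\eta)\bigr)$. Granting this, the obstruction to integrating $\xi$ at stage $m$ equals $\mathrm{ob}_{\mathrm{Def}_M}(\eta_m)=\phi(\mathrm{ob}_{\mathrm{Def}^{\mathrm{fr}}_\rho}(\tilde\eta_m))$, which lies in $\Ima\phi=\Ima\bigl(\har^2(V_n(A),{_f k})\hookrightarrow\Ext^2_A(M,M)\bigr)$. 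Running over all $\xi$ and all orders shows that every obstruction class to integrating infinitesimal deformations of $M$ lies in the image of the embedding, and since that image is a subspace it contains the whole obstruction subspace — which is the assertion of the conjecture.

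The main obstacle is precisely this compatibility step. The embedding of Theorem \ref{thm:Hoch-Harr} is only a linear injection of $H^2$-spaces and, as Remark \ref{calabi} shows, is not an isomorphism; to match the full obstruction maps — whose higher-order terms are governed by the cup products and Gerstenhaber brackets (Massey products) of the two cohomologies — it does not suffice to compare the two theories in cohomological degree $2$. What is really needed is that the cochain-level comparison underlying the embedding, induced through \eqref{adj} between the Harrison complex of $V_n(A)$ and the Hochschild complex of $A$, upgrades to a morphism of the $L_\infty$- (or DG Lie) algebras controlling the two deformation functors, at least in the range relevant to the obstruction maps. Verifying this higher, multiplicative compatibility, rather than mere linear compatibility on $\Ext^2_A(M,M)$, is the delicate point, and is the reason the statement is formulated as a conjecture rather than a theorem.
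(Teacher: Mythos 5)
This statement is a \emph{conjecture} in the paper: the authors give no proof of it, so there is no argument of theirs to compare yours against, and any complete argument here would be new mathematics. Measured against that bar, your proposal is a reasonable strategic outline but not a proof, and you effectively concede this in your final paragraph. The sound parts are the soft ones: deformations of $M$ over a local Artinian $B$ are $B$-free, so a framing exists and lifts along small extensions (smoothness of $\GL_n$), hence the forgetful morphism $\Phi:\mathrm{Def}^{\mathrm{fr}}_\rho\to\mathrm{Def}_M$ from the framed functor---identified with local lifts of $f$ via the adjunction \eqref{adj}---is formally smooth and surjective on tangent spaces. The heart of the matter, however, is the displayed compatibility $\mathrm{ob}_{\mathrm{Def}_M}(\Phi(\tilde\eta))=\phi\bigl(\mathrm{ob}_{\mathrm{Def}^{\mathrm{fr}}_\rho}(\tilde\eta)\bigr)$ \emph{with $\phi$ equal to the specific embedding of Theorem \ref{thm:Hoch-Harr}}, and this you introduce with ``granting this.'' That is not a gap in a routine step; it is essentially a restatement of the conjecture. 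Note in particular that the paper's embedding is built at the level of Hochschild extensions and $2$-cocycles, by applying $M_n(-)$ to the extension $B_\omega$ and pulling back along the universal representation $\eta_A$; it is not constructed as the map induced on obstruction spaces by any morphism of deformation functors, so even the identification of your $\phi$ with that embedding is itself a nontrivial claim requiring a cochain-level verification.

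Two further points deserve flagging. First, your assertion that the pro-representable framed functor ``hence, in characteristic zero, has obstruction space $\har^2(V_n(A),{_f k})$'' is not automatic: a pro-representable functor admits many obstruction theories, and what Theorem \ref{teo:har} (via Harrison's result) gives is only that vanishing of $\har^2(V_n(A),{_f k})$ detects regularity at $f$. Identifying $\har^2$ of the affine ring with coefficients in ${_f k}$ with the \emph{minimal} obstruction space of the functor of local lifts (i.e.\ with the appropriate Andr\'e--Quillen/Harrison group of the completed local ring, using characteristic zero and localization--completion compatibility) is an additional unproven step. Second, even granting formal smoothness of $\Phi$ and tangent surjectivity, compatibility of obstruction maps does not follow formally from functoriality; as you yourself observe, one would need the comparison underlying the embedding to respect the higher multiplicative structure (cup products, brackets) governing the two deformation problems, and Remark \ref{calabi} shows the degree-$2$ comparison alone is strictly weaker than an isomorphism. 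So the proposal correctly locates where the difficulty lives, which has expository value, but as written it assumes precisely the statement to be proved.
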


As a bonus, we further show that the approach to this smoothness problem developed  for $A$ finite-dimensional in \cite{Ge,Ge-P} works as well if $A$ is \ps \ and finitely presented or bimodule coherent.

\section{Preliminaries}

\subsection{Notations}\label{not} Unless otherwise stated we adopt the following
notations:
\begin{itemize}
\item $k$ is an algebraically closed field;
\item $F=k\{x_1,\dots,x_m\}$ is the associative free $k$-algebra on
$m$ letters;
\item $A\cong F/J$ is a finitely generated associative $k$-algebra;
\item $\alg_-,\,\calg_-$ and $\Set$ denote the
categories of $\ $-algebras,
commutative $\ $-algebras and sets, respectively;
\item The term "$A$-module" indicates a left $A$-module.   The categories of left -modules is denoted by $\amod$. The full subcategory of modules having finite dimension over $k$ will be denoted by $\amod_f;$
\item We write $\Hom_{\A}(B,C)$ for the morphisms from an object $B$ to $C$ in a category $\A$. If $\A=A\amod,\,$ then we will write $\Hom_{A}(-,-)$;
\item $A^{\op}$ is the opposite algebra of $A$ and $A^e:=A \otimes A^{\op}\,$ is the envelope of $A$. It
is an $A$-bimodule and a $k$-algebra. One can identify the category of the $A$-bimodules with $A^e\amod$ and we will do it thoroughly this paper;
\item $\mathrm{Ext}_{-}^i(\, , \,)$ denotes the $\Ext$ groups on the category $\amod ;$
\item $\hoch^i (A,-) $ is the Hochschild cohomology with coefficients in $A^e\amod .$
\end{itemize}

\subsection{The scheme of $n$-dimensional representations}\label{ran}

The study of the affine scheme $\ran$ of $n$-dimensional representations of an algebra $A$ goes back to the early $1970$'s with work of M. Artin, P. Gabriel, C. Procesi and D. Voigt. See for example \cite{Ga} and the references therein.

\noindent
Denote by $M_n(B)$ the full ring of $n \times n$ matrices over
$B,\,$ with $B$ a ring. If $f \ : \ B \to C $ is a ring homomorphism we
denote by $
M_n(f) :  M_n(B) \to M_n(C)$
the homomorphism induced on matrices.

\begin{definition}
Let  $A \in \alg_k,\, B \in \calg_k.\,$ By an {\em n-dimensional representation of} $A$ over $B$ we mean a homomorphism of $k$-algebras $
\rho\, : A \, \to M_n(B).\,
$
\end{definition}

\noindent
It is clear that this is equivalent to give an $A$-module structure on $B^n.$
The assignment $B\mapsto \Hom_{\calg_k}(A,M_n(B))$ defines a covariant functor
\begin{equation*}
\calg_{k} \longrightarrow \Set .
\end{equation*}
which is represented by a commutative $k$-algebra $V_n(A).\,$
\begin{lemma}\cite[Ch.4, \S 1]{Pr}\label{rep}
For all $A\in\nn_k$ and $\rho\, : A \, \to M_n(B)$ a linear representation, there exist $V_n(A)\in\calg_k$ and a
representation $\eta_A:A\to M_n(V_n(A))$ such that $\rho\mapsto
M_n(\rho)\circ \eta_A$ gives an isomorphism
\begin{equation}\label{proc}
\Hom_{\calg_{k}}(V_n(A),B)\xrightarrow{\cong}\Hom_{\nn_k}(A,M_n(B))\end{equation}
for all $B\in \calg_{k}$.
\end{lemma}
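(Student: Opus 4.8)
The plan is to establish representability by the standard ``generic matrices'' construction, reducing first to the free algebra and then imposing the relations of $A$. First I would reduce to the free case: writing $A\cong F/J$ with $\pi\colon F\twoheadrightarrow A$ the canonical projection, precomposition with $\pi$ identifies $\Hom_{\nn_k}(A,M_n(B))$ with the subset of those $\varphi\in\Hom_{\nn_k}(F,M_n(B))$ that annihilate $J$. Hence it suffices to represent the functor $B\mapsto\Hom_{\nn_k}(F,M_n(B))$ and then cut out the relations.

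For the free algebra I would set $V_n(F):=k[\xi^{(l)}_{ij}:1\le i,j\le n,\ 1\le l\le m]$, the polynomial ring in $mn^2$ commuting variables, and define the universal representation $\eta_F\colon F\to M_n(V_n(F))$ by $x_l\mapsto\Xi_l:=(\xi^{(l)}_{ij})_{i,j}$, the $l$-th generic matrix. The key observation is a matching of two freeness properties: a $k$-algebra map $\varphi\colon F\to M_n(B)$ is determined by, and is free on, the $m$ matrices $\varphi(x_l)\in M_n(B)$, i.e.\ on $mn^2$ arbitrary elements of $B$; while a commutative $k$-algebra map $g\colon V_n(F)\to B$ is determined by, and is free on, the $mn^2$ values $g(\xi^{(l)}_{ij})\in B$. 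Under $g\mapsto M_n(g)\circ\eta_F$, the matrix assigned to $x_l$ is $\big(g(\xi^{(l)}_{ij})\big)_{i,j}$, so the two sets of data correspond bijectively and naturally in $B$. This gives \eqref{proc} for $A=F$.

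To pass to $A=F/J$, I would let $I\subseteq V_n(F)$ be the ideal generated by all matrix entries $(\eta_F(r))_{ij}$, for $r\in J$ and $1\le i,j\le n$, and set $V_n(A):=V_n(F)/I$ with $p\colon V_n(F)\twoheadrightarrow V_n(A)$. By construction $M_n(p)\circ\eta_F$ annihilates $J$, so it factors through a representation $\eta_A\colon A\to M_n(V_n(A))$. Now I would run the chain of identifications: for $g\in\Hom_{\calg_k}(V_n(F),B)$, the map $M_n(g)\circ\eta_F$ annihilates $J$ if and only if $g\big((\eta_F(r))_{ij}\big)=0$ for all $r\in J$ and all $i,j$ (since a matrix over $B$ is zero iff all its entries vanish), i.e.\ if and only if $g$ kills $I$. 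By the universal property of the quotient $V_n(F)/I$, such $g$ correspond bijectively to elements of $\Hom_{\calg_k}(V_n(A),B)$; combining with the free-case bijection and tracing the composite, one obtains that $g\mapsto M_n(g)\circ\eta_A$ is a bijection $\Hom_{\calg_k}(V_n(A),B)\xrightarrow{\cong}\Hom_{\nn_k}(A,M_n(B))$, natural in $B$.

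The main point to handle with care---rather than a deep obstacle---is that $A$ being finitely generated does not force $J$ to be finitely generated, so $I$ may require infinitely many generators; this is harmless, since the construction takes the ideal generated by the entries of all relations at once, and the entrywise vanishing criterion, namely that $M_n(g)(x)=0$ holds precisely when $g$ kills every entry of $x$, is exactly what makes the annihilation of $J$ equivalent to the vanishing on $I$. Checking naturality in $B$ throughout is routine, as every identification used is functorial in $B$.
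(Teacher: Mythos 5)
Your proposal is correct and follows essentially the same route as the paper: the lemma is cited to Procesi, but the construction the paper sketches immediately afterwards---$V_n(F)=k[\xi_{lij}]$ with the generic matrices as universal representation, and $V_n(A)=V_n(F)/I$ with $I$ generated by the entries of the relations---is exactly your argument, including the entrywise-vanishing criterion that makes the quotient represent the subfunctor. Your remark that $J$ need not be finitely generated is a sound observation and consistent with the paper's phrasing that $f$ ``runs over the elements of $J$.''
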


\smallskip
\noindent
If $A=F,\,$ one has that $V_n(F):=k[\xi_{lij}],\,$ the
polynomial ring in variables $\{\xi_{lij}\,:\, i,j=1,\dots,n,\, l=1,\dots,m\}$ over $k.\,$
If $A= F/J$ finitely generated $k$-algebra, one defines  $V_n(A):=k[\xi_{lij}]/I$  where
$I$ is the ideal of $V_n(F)$ generated by the $n \times n $ entries of $f(\xi_1, . . . ,\xi_m)$,
 $f$ runs over the elements of $J$ and $\xi_l$ is the matrix $(\xi_{lij})$. Therefore $V_n(A)$ is an affine ring (i.e. a finitely generated algebra with identity) when $A$ is a finitely generated $k$-algebra.

\begin{definition}\label{pigreco}
We write $\ran$ to denote Spec\,$V_n(A).\,$ It is considered as a
$k$-scheme.  The map
\begin{equation*}
\eta _A :A \to
M_n(V_n(A)),\;\;\;\;a_{l}\longmapsto \xi_{l}^A:=(\xi_{lij}+I).
\end{equation*}
is called {\em the universal n-dimensional representation.}
\end{definition}

\begin{examples}\label{commuting} (i) By construction, if $A=F,\,$ then $\mathrm{Rep}_F^n (k) = M_n(k)^m.\, $
If $A=F/J,\,$ the $B$-points of  $\ran$ can be described as follows:
\[
\ran(B)= \{(X_1,\dots,X_m) \in M_n(B)^m \ : \ f(X_1,\dots, X_m)=0\; \mbox{for all} \, f \in J \};
\]
(ii) If $A=\CC [x,y],$
$
\ran (\CC)  = \{(M_1,M_2)\in M_2(\CC)^2,M_1M_2 = M_2 M_1 \}
$
is the \textit{commuting scheme}, see \cite{V3}.
\end{examples}

\begin{remark} Note that $\ran$ may be quite complicated. It is not reduced in general and it seems to be hopeless to describe the coordinate ring of its reduced structure. The scheme $\ran$ is also known as {\it the scheme of} $n${\it -dimensional} $A${\it -modules}.
\end{remark}

\section{The main result}\label{harri}

We prove our main result using Harrison cohomology. Given a commutative ring $R$ and an $R$-module $N$, we denote by $\har^*(R,N)$ the Harrison cohomology group i.e. the group $\mathcal{E}^*(R,N)$ introduced in \cite{Ha}. Harrison cohomology can be seen as a commutative version of Hochschild cohomology. For further details the reader is referred to \cite[section 9.3]{We}, where $\har^2(R,N)$ is denoted by $\mathrm{H}^2_s(R,N)$. \medskip

The following standard result establishes a link between $\Ext^i_A(M,M)$ and the Hochschild cohomology of $A$ with coefficients in $\End_{k}(M)$.

\begin{theorem}\cite[Corollary 4.4.]{C-E}\label{hoch} We have
$$\Ext ^i_{A}(M,M)\cong \mathrm{H}^i(A, \End _{k} (M)).$$
\end{theorem}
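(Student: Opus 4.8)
The plan is to identify both sides of the claimed isomorphism with the cohomology of one and the same explicit cochain complex, obtained from bar resolutions on each side, and then to read off the isomorphism. Under the identification of $A$-bimodules with $A^e\amod$ (where $A^e=A\otimes A^{\op}$), the Hochschild cohomology is $\hoch^i(A,N)=\Ext^i_{A^e}(A,N)$. Applying this to the bimodule $N=\End_k(M)$, whose actions are $(a\cdot g\cdot b)(x)=a\,g(bx)$ for $a,b\in A$ and $g\in\End_k(M)$, reduces the statement to constructing a natural isomorphism $\Ext^i_A(M,M)\cong\Ext^i_{A^e}(A,\End_k(M))$, valid for an arbitrary left $A$-module $M$ (no finiteness being needed).

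For the Hochschild side, I would use the (unnormalized) bar resolution $\cdots\to A\otimes A^{\otimes n}\otimes A\to\cdots\to A\otimes A\to A\to 0$ of $A$ by free $A^e$-modules. Applying $\Hom_{A^e}(-,\End_k(M))$ and the adjunction $\Hom_{A^e}(A\otimes V\otimes A,\End_k(M))\cong\Hom_k(V,\End_k(M))$, followed by the canonical isomorphism $\Hom_k(V,\End_k(M))\cong\Hom_k(V\otimes M,M)$, identifies the degree-$n$ cochain group with $\Hom_k(A^{\otimes n}\otimes M,M)$.

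For the $\Ext$ side, I would use the bar resolution $\cdots\to A\otimes A^{\otimes n}\otimes M\to\cdots\to A\otimes M\to M\to 0$ of $M$ by free left $A$-modules. Applying $\Hom_A(-,M)$ together with $\Hom_A(A\otimes W,M)\cong\Hom_k(W,M)$ identifies its degree-$n$ cochain group, again, with $\Hom_k(A^{\otimes n}\otimes M,M)$. Thus the two complexes have the same underlying graded space, the explicit isomorphism $\Phi$ in degree $n$ being $\Phi(f)(a_1\otimes\cdots\otimes a_n\otimes x)=f(a_1\otimes\cdots\otimes a_n)(x)$.

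The one step that requires genuine care is checking that $\Phi$ intertwines the two coboundary operators (up to the standard sign conventions), for only then do the cohomologies agree. The interior terms of the Hochschild differential, coming from the products $a_ia_{i+1}$, match the interior face maps of the bar resolution of $M$ verbatim, and the first term matches the left $A$-action surviving after $\Hom_A(A\otimes-,M)$. The delicate term is the last one: in the Hochschild differential it is the \emph{right} action $g\mapsto g\cdot a_{n+1}$ on $\End_k(M)$, which by the bimodule structure above is precomposition with $a_{n+1}$ on $M$; under $\Phi$ this becomes evaluation at $a_1\otimes\cdots\otimes a_n\otimes a_{n+1}x$, which is precisely the final face map $a_{n+1}\otimes x\mapsto a_{n+1}x$ of the bar resolution of $M$. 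Once this matching of differentials is verified, $\Phi$ is an isomorphism of cochain complexes, and passing to cohomology yields $\Ext^i_A(M,M)\cong\hoch^i(A,\End_k(M))$ in each degree $i$; naturality follows from the functoriality of the resolutions and adjunctions employed.
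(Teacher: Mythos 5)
Your proposal is correct, and since the paper offers no argument for Theorem \ref{hoch} --- it is quoted directly from \cite{C-E} --- the relevant comparison is with the classical proof there. Your computation is the bar-resolution instance of that proof: you identify both cochain complexes with $\Hom_k(A^{\otimes n}\otimes M,M)$ and check that $\Phi$ intertwines the differentials, correctly isolating the only delicate point, namely that the last Hochschild face (the right action of $a_{n+1}$ on $\End_k(M)$, which by the bimodule structure $(a\cdot g\cdot b)(x)=a\,g(bx)$ is precomposition with $a_{n+1}$) corresponds under $\Phi$ to the last bar face $a_{n+1}\otimes x\mapsto a_{n+1}x$; your sign bookkeeping matches on both sides. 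Two points are worth making explicit. First, you tacitly use that both bar complexes are honest projective resolutions: $A\otimes A^{\otimes n}\otimes A$ is $A^e$-free and $A\otimes A^{\otimes n}\otimes M$ is $A$-free because $k$ is a field (this is where the standing hypothesis enters; in \cite{C-E} it is relaxed to $A$ projective over the ground ring), with exactness supplied by the usual $k$-linear contracting homotopies $x\mapsto 1\otimes x$. Second, the proof in \cite{C-E} is the resolution-free form of yours: for any $A^e$-projective resolution $X_\bullet\to A$, the adjunction $\Hom_{A^e}\bigl(X,\Hom_k(M,M)\bigr)\cong\Hom_A(X\otimes_A M,M)$ together with the observation that $X_\bullet\otimes_A M\to M$ is an $A$-projective resolution yields the isomorphism at once; your $\Phi$ is exactly this adjunction specialized to the bar resolution, since $\mathrm{Bar}(A)\otimes_A M$ is precisely the bar resolution of $M$. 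The abstract route buys naturality in both variables and avoids verifying differentials by hand, while yours has the merit of being completely explicit; and, as you note, no finiteness of $M$ is needed, the finite-dimensionality in the paper's applications being irrelevant to this isomorphism.
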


For every algebra map $f:B\rightarrow A$ and $N\in A\amod$, denote by $_f N$ the corresponding left $B$-module structure on $N$. A similar notation is used on the right. In particular, if $N\in A^e\amod$, the notation $_fN_f$ means that $N$ is regarded as a $B^e$-module via $f$.

\begin{proposition}\label{pro:lifting}
The following assertions are equivalent for $A\in\alg_k$ and for every $M\in A^e\amod$:
 \begin{enumerate}
  \item  $\hoch^2(A,M)=0$;
  \item Let $f:A\to B$ be an algebra map and let $p:E\to B$ be a Hochschild extension of $B\in \alg_k$ with kernel $N$ such that $_fN_f=M$ (here $N$, being an ideal of square zero, is endowed with its canonical $B$-bimodule structure). Then $f$ has a lifting i.e. there is an algebra map $\overline{f}:A\to E$ such that $p\circ\overline{f}=f$.
  \begin{equation*}
  \xymatrixrowsep{15pt}
\xymatrix@C=30pt{&&&A\ar^{f}[d]\ar@{.>}_{\overline{f}}[dl]\\ 0 \ar[r] & N\ar[r]^i&E \ar@<0.5ex>@{->}^-{p}[r]  & B\ar@<0.5ex>@{..>}^-{\sigma}[l]\ar[r] &0  }
\end{equation*}
 \end{enumerate}
\end{proposition}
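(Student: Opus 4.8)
The plan is to exploit the dictionary, already recorded in the excerpt, between $\hoch^2(A,M)$ and equivalence classes of Hochschild extensions, together with the triviality criterion that such an extension splits via an algebra map precisely when its $2$-cocycle is a coboundary. Throughout, whenever $f:A\to B$ and a Hochschild extension $p:E\to B$ with kernel $N$ are given, I regard $N$ as an $A$-bimodule via $f$ and use the hypothesis ${}_fN_f=M$ to transport $2$-cochains on $A$ with values in $N$ into $2$-cochains with values in $M$. I would prove the two implications separately.

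For $(2)\Rightarrow(1)$, which I would treat first since it is the shorter, I would specialize to $B=A$ and $f=\id_A$. Given an arbitrary $2$-cocycle $\omega:A\otimes A\to M$, I would form the Hochschild extension $E=A\oplus_\omega M$ constructed above, whose canonical projection $p:A\oplus_\omega M\to A$ has kernel $M$ carrying exactly its prescribed $A$-bimodule structure (a direct check of the multiplication shows the left and right actions on the kernel are $bm$ and $mb$), so that ${}_fN_f=M$ holds for $f=\id_A$, and whose associated cocycle for the obvious section is $\omega$ itself. By hypothesis $(2)$, $\id_A$ lifts to an algebra map $\overline{\id}:A\to A\oplus_\omega M$ with $p\circ\overline{\id}=\id_A$; such a map is precisely an algebra section of $p$, so the extension is trivial. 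Invoking the triviality criterion recalled in the excerpt, $\omega$ is a $2$-coboundary, whence $[\omega]=0$. Since $\omega$ was arbitrary, $\hoch^2(A,M)=0$.

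For $(1)\Rightarrow(2)$, I would start from a $k$-linear section $\sigma:B\to E$ of $p$, which exists because the sequence is $k$-split, and set $g:=\sigma\circ f:A\to E$, a $k$-linear lift of $f$ (so $p\circ g=f$). The failure of $g$ to be multiplicative, namely $a\otimes a'\mapsto g(a)g(a')-g(aa')$, lands in $\ker p$ because $p$ is an algebra map and $f$ is multiplicative; identifying $\ker p=i(N)$ and $N=M$ via $f$, this defines a $2$-cochain $\omega:A\otimes A\to M$. The heart of this direction is to verify that $\omega$ is a $2$-cocycle: expanding the associativity identity $g(a)\bigl(g(a')g(a'')\bigr)=\bigl(g(a)g(a')\bigr)g(a'')$ in $E$, and using that the two $A$-actions on $N$ are induced through $f$ from the $B$-bimodule structure of $N$, one recovers exactly $d\omega=0$. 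By $(1)$ there is a $1$-cochain $\tau:A\to M$ with $\omega=d\tau$, and I would then set $\overline{f}:=g-i\circ\tau$; the coboundary identity forces $\overline{f}$ to be multiplicative, while $p\circ\overline{f}=p\circ g=f$ since $p\circ i=0$.

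The main obstacle is the bookkeeping in $(1)\Rightarrow(2)$: one must confirm that the curvature cochain $\omega$ genuinely takes values in $M$ under the identification ${}_fN_f=M$, that its cocycle condition matches the standard Hochschild differential with the bimodule actions pulled back along $f$, and finally that the corrected lift $\overline{f}$ can be made unital (normalizing $\omega$ so that $\omega(1_A,a)=\omega(a,1_A)=0$ and adjusting $\tau$ at $1_A$). These verifications are routine but sign- and convention-sensitive, and they are where the care lies; the remaining structural steps follow formally from the extension--cohomology correspondence already established.
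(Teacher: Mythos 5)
Your proof is correct and takes essentially the same route as the paper: the paper cites Weibel's Proposition 9.3.3 for the equivalence (whose proof is exactly your cochain-level argument for $(2)\Rightarrow(1)$ and the curvature-correction for the converse), and its own alternative proof of $(1)\Rightarrow(2)$ pulls the extension's cocycle back along $f$ to $\overline{\omega}=\omega\circ(f\otimes f)$, invokes triviality of $A\oplus_{\overline{\omega}}M$ to get an algebra section $s$, and composes with $(a,m)\mapsto \sigma f(a)+i(m)$ --- which, once unwound, is precisely your corrected lift $\overline{f}=\sigma\circ f-i\circ\tau$. The only difference is packaging: you perform the coboundary correction directly at the cochain level, while the paper phrases it as triviality of a pulled-back extension; your observation that unitality of $\overline{f}$ is automatic (an idempotent congruent to $1_E$ modulo a square-zero ideal equals $1_E$) disposes of the normalization worry you raise at the end.
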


\begin{proof}
 The proof is the same of \cite[Proposition 9.3.3]{We} for our specific $M$. However, we recall a different proof of $(1)$ implies $(2)$ that will be needed in the proof of Theorem \ref{thm:Hoch-Harr}. Let $\omega: B\otimes B\to N$ be the Hochschild $2$-cocycle associated to the Hochschild extension $E$ of $B$ by $M$. Then $\overline{\omega}:=\omega\circ(f\otimes f):A\otimes A\to {_fN_f}=M$ is a Hochschild $2$-cocycle so that we can consider the Hochschild extension $A\oplus_{\overline{\omega}} M$ of $A$ by $M$, see \cite[page 312]{We}. Since $\overline{\omega}$ is, by assumption, a $2$-coboundary, then the latter extension is trivial i.e. there is an algebra map $s:A\to A\oplus _{\overline{\omega}} M$  which is a right inverse of the canonical projection. Composing $s$ with the algebra map $A\oplus _{\overline{\omega}} M\to E:(a,m)\mapsto \sigma f(a)+i(m)$ yields the required map $\overline{f}$.
\end{proof}

Let $R$ be a commutative noetherian ring. Recall that a point $p\in \mathrm{Spec}\, R$  is {\em regular} if the localization $R_p$ of $R$ at $p$ is a regular local ring i.e. $\dim_k(\mathfrak{m}/\mathfrak{m}^2)=\dim R_p,$
where $\mathfrak{m}$ is the unique maximal ideal of $R_p$ and $\dim R_p$ is its Krull dimension.
The ring $R$ is said to be {\em regular} if the localization at every prime ideal is a regular local ring.\\
The following result is a variant of \cite[Corollary 20]{Ha}.
\begin{theorem}\label{teo:har}
Let $f:V_n(A)\rightarrow k$ be a $ k$-point of $\ran.$ Then $f$ is a regular point of $\ran (k)$ if and only if $\, \har^2(V_n(A),{_f k})=0$.
\end{theorem}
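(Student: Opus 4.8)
The plan is to localise the problem at the point and then match the resulting cohomology group against the Jacobian criterion for regularity. Write $R:=V_n(A)$ and set $\mathfrak m:=\ker f$; since $k$ is algebraically closed the Nullstellensatz gives $R/\mathfrak m\cong k$, so the coefficient module ${_f k}$ is precisely $R/\mathfrak m$ regarded as an $R$-module. Being annihilated by $\mathfrak m$, it is already an $R_{\mathfrak m}$-module, and the first step is to establish the localisation identity $\har^2(R,{_f k})\cong\har^2(R_{\mathfrak m},k)$. I would prove this either through the extension description recalled above---every symmetric square-zero extension of $R$ by ${_f k}$ is supported at $\mathfrak m$, so localisation at $\mathfrak m$ and restriction along the flat map $R\to R_{\mathfrak m}$ induce mutually inverse bijections on equivalence classes---or, equivalently, by invoking that the cotangent complex commutes with localisation and that $\Ext$ against a module supported at $\mathfrak m$ is computed over $R_{\mathfrak m}$.

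It then remains to show, for the essentially-of-finite-type local $k$-algebra $(R_{\mathfrak m},\mathfrak m R_{\mathfrak m},k)$, that regularity is equivalent to $\har^2(R_{\mathfrak m},k)=0$. In characteristic zero the Harrison cohomology coincides with the Andr\'e--Quillen (cotangent) cohomology, so $\har^2(R_{\mathfrak m},k)\cong D^1(R_{\mathfrak m}/k;k)$, the group classifying square-zero extensions. Presenting $R_{\mathfrak m}\cong S/I$ with $S$ a regular local ring (a localised polynomial ring) and $I$ contained in its maximal ideal $\mathfrak n$, the transitivity (Jacobi--Zariski) sequence for $k\to S\to R_{\mathfrak m}$ together with the smoothness of $S$ over $k$ (which annihilates $D^{\geq 1}(S/k;-)$) collapses to the four-term exact sequence
\[
0\to \mathrm{Der}_k(R_{\mathfrak m},k)\to \mathrm{Der}_k(S,k)\xrightarrow{\ \delta\ }\Hom_k(I/\mathfrak n I,k)\to \har^2(R_{\mathfrak m},k)\to 0 .
\]
Hence $\har^2(R_{\mathfrak m},k)\cong\mathrm{coker}(\delta)$, and it vanishes precisely when $\delta$ is surjective, equivalently when the conormal map $\bar d\colon I/\mathfrak n I\to\Omega_{S/k}\otimes_S k$ is injective.

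To close the argument I would invoke the Jacobian criterion. The right-exact conormal sequence gives $\dim_k(\Omega_{R_{\mathfrak m}/k}\otimes_R k)=\dim S-\mathrm{rank}(\bar d)$, while $\Omega_{R_{\mathfrak m}/k}\otimes_R k\cong\mathfrak m/\mathfrak m^2$ because the residue field equals $k$; comparing with $\dim R_{\mathfrak m}=\dim S-\mathrm{ht}(I)$ and using $\mathrm{ht}(I)\le\mu(I)=\dim_k(I/\mathfrak n I)$, one checks that $\bar d$ is injective if and only if $\dim_k(\mathfrak m/\mathfrak m^2)=\dim R_{\mathfrak m}$, i.e. if and only if the point is regular. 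This is exactly the local reformulation of Harrison's \cite[Corollary 20]{Ha}. The easy direction is ``regular $\Rightarrow$ vanishing'' (a regular local ring is formally smooth over $k$, so all square-zero extensions split); the main obstacle is the converse, where one must genuinely extract regularity from the vanishing of a single cohomology group, and this is precisely where the conormal computation---equivalently Harrison's theorem---does the work. The two delicate technical points are the localisation step and the characteristic-zero comparison $\har^2\cong D^1$, on which the whole identification rests.
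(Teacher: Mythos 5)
Your proposal is correct, but it follows a genuinely different route from the paper's. The paper's proof is essentially a two-line reduction to a citation: since $A$ is finitely generated, $R:=V_n(A)$ is an affine ring, $k$ is perfect (being algebraically closed), and ${_f k}\cong R/\ker f$ as $R$-modules (no Nullstellensatz is needed here: $f$ is a $k$-algebra map onto $k$, so $R/\ker f\cong \mathrm{im}\, f=k$ automatically); Harrison's \cite[Corollary 20]{Ha} then gives the equivalence directly for the global ring $R$ with coefficients in $R/\mathfrak{m}$, with no localization and no passage through Andr\'e--Quillen cohomology. What you do instead is re-prove Harrison's criterion: localize ($\har^2(R,{_f k})\cong\har^2(R_{\mathfrak m},k)$), identify $\har^2$ with the cotangent cohomology $D^1$, run the Jacobi--Zariski sequence for $k\to S\to R_{\mathfrak m}$ with $S$ a localized polynomial ring, and conclude with the conormal/Jacobian criterion. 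Your route buys self-containedness and makes transparent where regularity actually enters; the paper's buys brevity and slightly more generality in characteristic, since Harrison's theorem needs only perfectness of $k$, whereas you invoke characteristic zero for $\har^2\cong D^1$ (though in this particular degree the identification is characteristic-free: both groups classify commutative square-zero extensions of $R_{\mathfrak m}$ by $k$, exactly as recalled in Section \ref{harri}). One step of your sketch must be written out for the argument to be complete: in the direction ``regular $\Rightarrow \bar d$ injective,'' the inequalities you list ($\mathrm{ht}(I)\le\mu(I)$, the conormal rank count) do not suffice by themselves; you also need the standard fact that if $S$ and $R_{\mathfrak m}=S/I$ are both regular local then $I$ is generated by part of a regular system of parameters of $S$, whence $\mu(I)=\mathrm{ht}(I)$. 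With that supplied, your inequality chain closes in both directions and the proof is sound.
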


\begin{proof} Set $\mathfrak{m}:=\ker(f)$ and $R:=V_n(A)$. Note that $k$ is a perfect field, as it is algebraically closed. Moreover, since $A$ is f.g., then $R$ is an affine ring (as observed after Lemma \ref{rep}) and hence we can apply \cite[Corollary 20]{Ha} to get that $f$ is regular if and only if $\har^2(R,R/\mathfrak{m})=0$. We conclude by observing that ${_f k}=\im f\cong R/\ker f=R/\mathfrak{m}$ as left $R$-modules.
\end{proof}

\begin{theorem}\label{thm:Hoch-Harr}
 Let $A$ be a f.g. $ k$-algebra, let $f:V_n(A)\rightarrow k$ be a $k$-algebra map and let $\rho:A\rightarrow M_n(k)$ be the algebra map that corresponds to $f$ through \eqref{proc}. Then there is a linear embedding of $\har^2(V_n(A),{_f k})$ into $H^2 (A, {_\rho M_n(k)_\rho})$.
As a consequence, $M\in \mathrm{Rep}_{A}^{n}$ is a regular point whenever $\Ext ^2_{A}(M,M)=0$.
 \end{theorem}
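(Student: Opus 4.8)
The plan is to construct the embedding at the level of extension classes, letting the adjunction \eqref{proc} carry the essential load. By the discussion preceding Theorem~\ref{teo:har}, a class in $\har^2(V_n(A),{_f k})$ is represented by a commutative Hochschild extension
\[ 0 \to {_f k} \to E \xrightarrow{p} V_n(A) \to 0, \]
with $E$ a commutative $k$-algebra and $\ker p$ a square-zero ideal. Applying the functor $M_n(-)$ yields a sequence $0 \to M_n(k) \to M_n(E) \xrightarrow{M_n(p)} M_n(V_n(A)) \to 0$, which is again a Hochschild extension since $(\ker p)^2=0$ forces $M_n(\ker p)^2=0$, and which is $k$-split via $M_n(\sigma)$. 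I would then pull this back along the universal representation $\eta_A:A\to M_n(V_n(A))$ to obtain a Hochschild extension $0\to M_n(k)\to E'\to A\to 0$, and define the image of $[\omega]$ to be its class in $H^2(A,{_\rho M_n(k)_\rho})$.

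Two bookkeeping points must be settled before this is a sensible definition. First the coefficient bimodule: an entrywise computation shows that the $M_n(V_n(A))$-bimodule structure induced on $M_n(k)=\ker M_n(p)$ is matrix multiplication after applying $M_n(f)$, so that pulling back along $\eta_A$ makes $A$ act through $M_n(f)\circ\eta_A$, which by Lemma~\ref{rep} is exactly $\rho$; hence the coefficients are precisely ${_\rho M_n(k)_\rho}$. Second, well-definedness and linearity are cleanest at the cocycle level: writing $\bar B=(b_{ij})$, a symmetric cocycle $\omega$ produces the cocycle $\Omega(\bar B,\bar B')_{ij}=\sum_l\omega(b_{il},b'_{lj})$ on $M_n(V_n(A))$, and the image cocycle is $\Omega\circ(\eta_A\otimes\eta_A)$. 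These formulas are $k$-linear in $\omega$ and send coboundaries to coboundaries (the alternate proof of Proposition~\ref{pro:lifting}$(1)\Rightarrow(2)$ is the model here), so the assignment descends to a linear map $\Phi:\har^2(V_n(A),{_f k})\to H^2(A,{_\rho M_n(k)_\rho})$.

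The crux is injectivity, and this is where the adjunction is indispensable. Suppose $\Phi([\omega])=0$, i.e.\ the pulled-back extension $E'$ is trivial. Triviality gives an algebra lift $\overline{\eta_A}:A\to M_n(E)$ of $\eta_A$ with $M_n(p)\circ\overline{\eta_A}=\eta_A$. Since $E$ is commutative, $\overline{\eta_A}$ is an $n$-dimensional representation of $A$ over the commutative ring $E$, so by the universal property \eqref{proc} it corresponds to a unique $k$-algebra map $g:V_n(A)\to E$ with $M_n(g)\circ\eta_A=\overline{\eta_A}$. Applying $M_n(p)$ gives $M_n(p\circ g)\circ\eta_A=M_n(p)\circ\overline{\eta_A}=\eta_A=M_n(\id)\circ\eta_A$, whence the uniqueness clause of the adjunction forces $p\circ g=\id_{V_n(A)}$. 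Thus $g$ is an algebra section of $p$, so $E$ is a trivial commutative extension and $[\omega]=0$ in $\har^2$, establishing the embedding.

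Finally, for the consequence I would identify ${_\rho M_n(k)_\rho}$ with $\End_k(M)$, where $M=k^n$ carries the $A$-module structure given by $\rho$; Theorem~\ref{hoch} then yields $H^2(A,{_\rho M_n(k)_\rho})\cong\Ext^2_A(M,M)$. If $\Ext^2_A(M,M)=0$, the embedding forces $\har^2(V_n(A),{_f k})=0$, and Theorem~\ref{teo:har} concludes that $f$—equivalently the point $M$—is regular in $\ran$. I expect the main obstacle to be not injectivity, which the adjunction dispatches cleanly, but the careful verification that $M_n(-)$ followed by pullback is well defined on equivalence classes and respects the additive structure; the explicit cocycle formula above is precisely what renders both checks routine.
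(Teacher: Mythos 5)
Your proposal is correct and follows essentially the same route as the paper's proof: apply $M_n(-)$ (the paper phrases this as tensoring with $S=M_n(k)$ and then identifying $S\otimes E\cong M_n(E)$) to the commutative Hochschild extension, pull the resulting extension back along the universal representation $\eta_A$, identify the coefficient bimodule as ${_\rho M_n(k)_\rho}$ via $M_n(f)\circ\eta_A=\rho$, and prove injectivity by converting an algebra lift $A\to M_n(E)$ into an algebra section $V_n(A)\to E$ through the adjunction \eqref{proc}. Even your explicit cocycle formula $\Omega(\bar B,\bar B')_{ij}=\sum_l\omega(b_{il},b'_{lj})$ agrees with the paper's cocycle $\omega_n$ associated to the splitting $M_n(\sigma)$, so there is nothing substantive to reconcile.
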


\begin{proof} Each $M\in \mathrm{Rep}_{A}^{n}$ is of the form $M\cong{_\rho (k^n)}$ for some $\rho:A\rightarrow M_n(k)$ as in the statement. By Theorem \ref{hoch}, we have $\Ext ^2_{A}(M,M)\cong H^2(A, \End _{k} (M))\cong H^2 (A, {_\rho M_n(k)_\rho})$. Thus the last assertion of the statement follows by Theorem \ref{teo:har} once proved the embedding of $\har^2(V_n(A),{_f k})$ into $H^2 (A, {_\rho M_n(k)_\rho})$. Let us construct it explicitly. The idea of the proof of this fact is inspired by \cite[Proposition 19.1.4]{G1} where the functor $M_n(-)$ is applied to a commutative extension with nilpotent kernel. Set $B:=V_n(A)$ and let $\omega:B\otimes B\rightarrow {_f k}$ be a Harrison $2$-cocycle. Consider the Hochschild extension associated to $\omega$
 \begin{equation}\label{Hochextk}
\xymatrix@C=30pt{ 0 \ar[r] & {_f k}\ar[r]^i &B_\omega \ar@<0.5ex>@{->}^-{p}[r]   & B\ar@<0.5ex>@{..>}^-{\sigma}[l]\ar[r] &0  }
\end{equation}
where, for brevity, we set $B_\omega:=B\oplus_\omega k$. %Explicitly, the multiplication and unit of $B_\omega$ are given by
 %\begin{gather*}
 % (b, k)\cdot(b', k'):=(bb',f(b')k+f(b)k'-\omega(b,b')),\quad 1_{B_\omega}=(1_B,\omega(1_B,1_B)).
 %\end{gather*}
 Set $S:=M_n(k)$ and apply the exact functor $S\otimes(-)$ to \eqref{Hochextk} to obtain the Hochschild extension
 \begin{equation}\label{HochextkS}
\xymatrix@C=40pt{ 0 \ar[r] & S\otimes{_f k}\ar[r]^{S\otimes i} &S\otimes B_\omega \ar@<0.5ex>@{->}^-{S\otimes p}[r]   & S\otimes B\ar@<0.5ex>@{..>}^-{S\otimes\sigma}[l]\ar[r] &0  }
\end{equation} Here $S\otimes{_f k}$ is a bimodule over $S\otimes B$ via $(s\otimes b)(s'\otimes l)(s''\otimes b'')=ss's''\otimes blb''=ss's''\otimes f(b)lf(b''),$ for every $s,s',s''\in S,l\in k, b,b''\in B$.  Now, let $E$ be either ${_f k}$, $B_\omega $ or $B$ and apply the canonical isomorphism $S\otimes E\to M_n(E):(k_{ij})\otimes e\mapsto (k_{ij}e)$ to \eqref{HochextkS} to obtain the Hochschild extension
 \begin{equation}\label{Hochextkn}
  \xymatrixrowsep{15pt}
 \xymatrix@C=30pt{  0 \ar[r] & N\ar[r]^-{i_n} &M_n(B_\omega) \ar@<0.5ex>@{->}^-{p_n}[r]   & M_n(B)\ar@<0.5ex>@{..>}^-{\sigma_n}[l]\ar[r] &0  }
\end{equation}
 where we set $p_n:=M_n(p),\sigma_n=M_n(\sigma)$, $i_n:=M_n(i)$ and $N$ is $M_n(k)$ regarded as a bimodule over $M_n(B)$ via $(b_{is})(l_{tj})=(\sum_s b_{is}l_{sj})$ and $(l_{tj})(b_{is})=(\sum_j l_{tj}b_{js})$ for every $(b_{is})\in M_n(B)$ and $(l_{tj})\in M_n(k).$ Thus$$(b_{is})(l_{tj})=(\sum_s b_{is}l_{sj})=(\sum_s f(b_{is})l_{sj})=f_n((b_{is}))\cdot(l_{tj})$$ where $f_n:=M_n(f)$ i.e. $N={_{f_n} (M_n(k))}$. Let $\eta=\eta_A:A\rightarrow M_n(V_n(A))=M_n(B)$ be the universal $n$-dimensional representation of Definition \ref{pigreco}. Hence $_\eta N={_\eta({_{f_n} (M_n(k))})}={_{(f_n\circ \eta )} M_n(k)}={_\rho M_n(k)}$, where we used that $f_n\circ \eta=M_n(f)\circ \eta=\rho$ which holds by definition of $\rho$. A similar argument applies to the right so that we get $_\eta N_\eta={_\rho M_n(k)_\rho}$.
 Let $\omega_n: M_n(B)\otimes M_n(B)\to N$ be the Hochschild $2$-cocycle associated to the Hochschild extension \eqref{Hochextkn}. Then $\overline{\omega_n}:=\omega_n\circ(\eta\otimes \eta)$ is a Hochschild $2$-cocycle so that we can consider the assignment $$\alpha:\har^2(V_n(A),{_f k})\to H^2 (A, {_\eta N_\eta}):[\omega] \mapsto [\overline{\omega_n}].$$
 This is a well-defined map. In fact, if $[\omega]=0$, then we can choose $\sigma$ to be an algebra map from the very beginning and hence $\sigma_n$ is an algebra map so that $\omega_n=0$. Suppose $\alpha([\omega])=0$. Then $\overline{\omega_n}$ is a $2$-coboundary. This condition guarantees, by the proof of Proposition \ref{pro:lifting}, that there is a $k$-algebra map $\overline{\lambda}:A\rightarrow M_n(B_\omega)$ such that $p_n\circ\overline{\lambda}=\eta_A.$ This map corresponds, via \eqref{proc}, to an algebra map $\lambda:B\rightarrow B_\omega$ such that $p\circ\lambda=\mathrm{Id}_B.$ This means that the Hochschild extension \eqref{Hochextk} is trivial whence $[\omega]=0.$ Thus $\alpha$ is injective.
\end{proof}

\begin{remark}\label{calabi}
The map $\har^2(V_n(A),{_f k}) \hookrightarrow H^2 (A, {_\rho M_n(k)_\rho})$ is not an isomorphism in general.
Furthermore the condition $\Ext_A^2(M,M)=0$ is not necessarily satisfied by regular points in $\ran.$
There is indeed the following counterexample.\\
Let $A$ be a $2$-Calabi Yau algebra, see \cite[Definition 3.2.3]{G2} for details.
It has been proven by Bocklandt that such an algebra has simple modules and that these modules are regular points in $\ran $ (see  \cite[Section 7.1]{Bock2}).
Therefore, for a simple  $M \in \ran (k)$ one has $ \har^2 (V_n(A),{_f k})=0.$
On the other hand, since $A$ is $2$-Calabi Yau, one has $\Ext ^2 _A (M, M)\cong \Ext ^0_A(M,M)\cong \End_A(M)\cong k, $ for all $M\in\ran.$

The referee pointed out to our attention the following example. Consider
the ($4$-dimensional) preprojective algebra $\Pi$ for a quiver of type $\textsf{A}_2$.
An elementary calculation shows that for the two 1-dimensional simple
$\Pi$-modules $S_1$ and $S_2$ one has $\Ext^1_\Pi(S_i, S_i) = 0$ but $\Ext^2_\Pi(S_i, S_i)\cong k$. Thus $\mathrm{Rep}_\Pi^n$ has two smooth points with non-trivial obstructions.
\end{remark}

\section{Examples and Applications}\label{exapp}

Next aim is to introduce and investigate the notion of \ps \ algebra. We will give several examples of such algebras. Moreover we will analyze the relationship between \ps, formally smooth and hereditary algebras to better understand the influence of the structure of $A$ on the smoothness of $\ran .$
\subsection{Finitely unobstructed algebras}
\begin{definition}\label{psdef}
Let $A$ be a $k$-algebra. Given $n\in \mathbb{N}$, we say that $A$ is $n$-\emph{\ps}, if $\Ext^2_{A}(M,M) =0,$  for  every $M\in \ran(k).$ We say that $A$ is \emph{\ps}, if it is $n$-\emph{\ps} for every $n\in \mathbb{N}$.
\end{definition}

\begin{corollary}\label{thm:ps}
The scheme $\ran$ is smooth for all $n$-\ps \ $k$-algebra $A$.
 \end{corollary}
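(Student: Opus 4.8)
The plan is to assemble Theorem~\ref{thm:Hoch-Harr}, which already disposes of the single-point statement, and then upgrade from regularity at the $k$-points to regularity, and hence smoothness, of the whole scheme. First I would recall that, since $A$ is finitely generated, $V_n(A)$ is an affine $k$-algebra (as observed after Lemma~\ref{rep}), so that $\ran=\mathrm{Spec}\,V_n(A)$ is a scheme of finite type over $k$. Because $k$ is algebraically closed, the Nullstellensatz identifies the closed points of $\ran$ with its $k$-points, that is, with the $k$-algebra maps $f:V_n(A)\to k$; through the adjunction \eqref{proc} these correspond exactly to the representations $\rho:A\to M_n(k)$, equivalently to the modules $M\in\ran(k)$.

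Next I would invoke the hypothesis directly. By the definition of $n$-\ps{} algebra, $\Ext^2_A(M,M)=0$ for every $M\in\ran(k)$, so the last assertion of Theorem~\ref{thm:Hoch-Harr} applies at each such $M$ and shows that every closed point of $\ran$ is a regular point.

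The crux is then to pass from ``regular at every closed point'' to ``regular at every point,'' and finally to smoothness. Here I would use that $\ran$ is a Jacobson scheme, being of finite type over a field, so that closed points are dense in every non-empty closed subset; combined with the fact that the singular locus of $\ran$ is closed—which holds, for instance, because the smooth locus of a finite-type $k$-scheme is open and, $k$ being perfect, the smooth and regular loci coincide. Were the singular locus non-empty, it would therefore contain a closed point, contradicting the previous step; hence $\ran$ is regular, and by the same coincidence of loci it is smooth over $k$.

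The main obstacle I expect is precisely this globalization step: Theorem~\ref{thm:Hoch-Harr} controls only the $k$-rational (closed) points, whereas smoothness is a condition at every point of the scheme, including the non-closed ones. The argument above circumvents a pointwise verification at non-closed points by combining the openness of the regular locus with the Jacobson property, but one must be careful to cite the correct form of the equivalence between regularity and smoothness over a perfect field and to confirm that the openness of the regular locus is legitimately available for affine $k$-algebras (e.g.\ via their excellence).
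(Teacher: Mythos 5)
Your proposal is correct and takes essentially the same route as the paper: the paper's entire proof of Corollary~\ref{thm:ps} is the single line ``It follows by Theorem~\ref{thm:Hoch-Harr}.'' The globalization you spell out---identifying closed points with $k$-points via the Nullstellensatz, invoking openness of the regular locus together with the Jacobson property, and using that regularity coincides with smoothness over a perfect field---is precisely the standard argument the paper leaves implicit, so your added care is sound but does not constitute a different method.
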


 \begin{proof}
 It follows by Theorem \ref{thm:Hoch-Harr}.
 \end{proof}

We recast here some basic concepts in order to list examples and applications of the results proven in Section \ref{harri}.
\subsection{Hereditary algebras}
Recall that the \textit{projective dimension} $\mathrm{pd}(M)$ of an $M \in A\amod$ is the minimum length of a projective resolution of $M.\, $
\begin{definition}
The \textit{global dimension} of a ring $A,\,$ denoted with $gd(A)$, is the supremum of the set of projective dimensions of all (left) $A$-modules.
If $gd(A)\leq 1,$ then $A$ is called \textit{hereditary}.
\end{definition}

It holds that $gd(A)\leq d$ if and only if $\Ext^{d+1}_A(M,N)=0,$ for all $M,N\in A$-mod, see \cite[Proposition 2.1, page 110]{C-E}.

\subsection{Formally smooth algebras}\label{subsec:FS}
For further readings on these topics see \cite{G1,G2}.

\begin{definition}(Definition 3.3. \cite{C-Q}). \label{fsm}
An $A \in \alg _{k}$ is said to be \textit{formally smooth} (or  \textit {quasi-free}), if it satisfies the equivalent conditions:

\n
i)
any homomorphism $ \varphi \in \Hom_{\alg _{k}}(A,R/N)$ where $N$ is a nilpotent (two-sided) ideal in an algebra $R \in \alg _{k},\,$ can be lifted to a homomorphism $\overline{\varphi} \in \Hom_{\alg _{k}}(A,R)\,$ that commutes with the projection
$R \rightarrow R/N $;

\n
ii) $\hoch^2(A,M)=0$ for any $M\in A^e\amod$;

\n
iii) the kernel $\Omega ^1 _A$ of the  multiplication $A \otimes A \rightarrow A $ is a projective $A^e$-module.
\end{definition}

\begin{remark}
When $A$ is commutative $\Omega ^1 _A$ is nothing but the module of the K\"ahler differentials (see \cite[Section 8]{G1}).
\end{remark}

If we substitute $A \in \calg _{k}$ and $ \Hom_{\calg _{k}}(A,-)$ in Definition \ref{fsm}, we obtain the classical definition of regularity in the commutative case (see \cite[Proposition 4.1.]{LB}). On the other hand, if we ask for a commutative algebra $A$  to be formally smooth in the category $\alg _{k}$ we obtain regular algebras of dimension $\leq 1\,$ only (see \cite[Proposition 5.1.]{C-Q}). Thus, if $X=\mathrm{Spec} A$ is an affine smooth scheme, then $A$ is not formally smooth unless $\dim X \leq 1.\,$

\subsection{Implications and equivalences}
Let us collect the following, well-known, characterizations of finite-dimensional hereditary algebras.

\begin{theorem}\label{equiv}
Let $A$ be a finite-dimensional algebra over $k.\,$ The following assertions are equivalent:
 \begin{enumerate}
  \item[(1)] $A$ is formally smooth;
  \item[(2)] $\hoch^2(A,N)=0$ for every $N\in A^e\amodf$;
  \item[(3)] $A$ is \ps ;
  \item[(4)] $\mathrm{Rep}_{A}^{n}$ is smooth for every $n\in\mathbb{N}$;
  \item[(5)] $A$ is hereditary.
 \end{enumerate}
\end{theorem}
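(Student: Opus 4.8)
My plan is to prove the five conditions equivalent through the cycle $(1)\Rightarrow(2)\Rightarrow(3)\Rightarrow(4)\Rightarrow(5)\Rightarrow(1)$, arranged so that four arrows reduce to results already in hand and only one carries real content. The arrow $(1)\Rightarrow(2)$ is immediate: formal smoothness is by Definition \ref{fsm}(ii) the vanishing of $\hoch^2(A,M)$ for \emph{every} $M\in A^e\amod$, hence a fortiori for every finite-dimensional $N\in A^e\amodf$. For $(2)\Rightarrow(3)$ I would fix $n$ and an arbitrary $M\in\ran(k)$; as $M$ is $n$-dimensional, $\End_k(M)\cong M_n(k)\in A^e\amodf$, and Theorem \ref{hoch} gives $\Ext^2_A(M,M)\cong\hoch^2(A,\End_k(M))$, which vanishes by (2). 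Thus $\Ext^2_A(M,M)=0$ for all $n$ and all $M\in\ran(k)$, i.e. $A$ is \ps. The arrow $(3)\Rightarrow(4)$ is exactly Corollary \ref{thm:ps}, the payoff of the main Theorem \ref{thm:Hoch-Harr}, while $(4)\Rightarrow(5)$ is Bongartz's characterization of finite-dimensional hereditary algebras by the smoothness of all their representation schemes \cite[Proposition 1]{Bo}.

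This leaves $(5)\Rightarrow(1)$ as the heart of the matter. To frame it I would first record the opposite implication, which is free and needs no finiteness: if $A$ is formally smooth then $\Omega^1_A$ is $A^e$-projective (Definition \ref{fsm}(iii)), and applying $-\otimes_A L$ to $0\to\Omega^1_A\to A\otimes_k A\to A\to0$ yields, for every $L\in A\amod$, a length-one projective resolution $0\to\Omega^1_A\otimes_A L\to A\otimes_k L\to L\to0$, so that $gd(A)\le1$. Hence $(1)\Rightarrow(5)$ holds with no hypothesis on $A$, and the entire role of the assumptions is to force the converse passage from the one-sided bound $gd(A)\le1$ to the two-sided bound $\pd_{A^e}(A)\le1$. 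That the latter is equivalent to formal smoothness is clear from $\hoch^2(A,N)\cong\Ext^2_{A^e}(A,N)\cong\Ext^1_{A^e}(\Omega^1_A,N)$: its vanishing for all $N$ says precisely that $\Omega^1_A$ is projective.

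The mechanism I expect to carry $(5)\Rightarrow(1)$, and the main obstacle, is separability of the semisimple quotient. Because $k$ is algebraically closed, $S:=A/\mathrm{rad}(A)\cong\prod_i M_{n_i}(k)$ is separable over $k$; by Wedderburn--Malcev the projection $A\to S$ splits as an algebra map, so $S$ sits inside $A$ with $A=S\oplus\mathrm{rad}(A)$ as $S$-bimodules. Separability forces $\hoch^{>0}(S,-)=0$ and thereby identifies absolute Hochschild cohomology with the $S$-relative theory computed from the relative bar resolution with terms $A\otimes_S\overline{A}^{\,\otimes_S n}\otimes_S A$, where $\overline{A}=\mathrm{rad}(A)$; feeding in the consequence of heredity that $\mathrm{rad}(A)$ is projective as a left $A$-module should collapse this resolution to length one, giving $\pd_{A^e}(A)\le1$. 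The clean statement behind all of this is the equality $\pd_{A^e}(A)=gd(A)$, a standard fact for finite-dimensional algebras with separable semisimple quotient, and I expect the genuine difficulty to lie exactly in this promotion of a module-level bound to the bimodule $A$ --- which is just the step that fails for the infinite-dimensional hereditary algebras that are not formally smooth. A structural alternative avoiding the relative machinery is available: a basic finite-dimensional hereditary $k$-algebra is, by Gabriel's theorem, a path algebra $kQ$ of a finite acyclic quiver; $kQ$ is formally smooth through its two-term bimodule resolution over the separable subalgebra $kQ_0$ of vertices; and formal smoothness, being the condition $\hoch^2(A,-)=0$, is Morita invariant, hence passes from $kQ$ to the Morita-equivalent algebra $A$.
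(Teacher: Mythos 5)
Your proposal is correct, and for four of the five arrows it coincides exactly with the paper's proof: $(1)\Rightarrow(2)$ trivially, $(2)\Rightarrow(3)$ via Theorem \ref{hoch}, $(3)\Rightarrow(4)$ via Corollary \ref{thm:ps}, and $(4)\Rightarrow(5)$ via \cite[Proposition 1]{Bo}. The one divergence is $(5)\Rightarrow(1)$: the paper disposes of it with a bare citation of \cite[Proposition 0.6.]{C-B3}, whereas you argue it. Both of your routes are sound. The first rests on Eilenberg's theorem (Cartan--Eilenberg \cite{C-E}, Ch.\ IX) that a finite-dimensional algebra whose semisimple quotient $A/\mathrm{rad}(A)$ is separable satisfies $\pd_{A^e}(A)=gd(A)$, separability being automatic since $k$ is algebraically closed; note, though, that your phrase ``should collapse this resolution to length one'' is a gesture at that theorem's proof rather than a proof, so in this route the separability theorem is itself the load-bearing citation, playing exactly the role \cite{C-B3} plays in the paper. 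Your second route is complete and more elementary: Gabriel's theorem makes $A$ Morita equivalent to $kQ$ with $Q$ finite acyclic, $kQ$ is formally smooth via its standard length-one projective bimodule resolution, and formal smoothness passes across Morita equivalence because an equivalence $A\amod\simeq B\amod$ induces an equivalence $A^e\amod\simeq B^e\amod$ carrying $A$ to $B$, hence preserving the condition $\pd_{A^e}(A)\leq 1$. What your treatment buys is self-containedness and a precise view of where finite-dimensionality and algebraic closedness enter --- exactly the step whose failure produces the hereditary-but-not-formally-smooth infinite-dimensional examples, such as the Weyl algebra, in Theorem \ref{infi}; what the paper's citation buys is brevity. (Your aside that $(1)\Rightarrow(5)$ needs no finiteness hypothesis is also correct; it is \cite[Proposition 5.1]{C-Q}, which the paper invokes in Theorem \ref{infi}.)
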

\begin{proof}
$(1)\Rightarrow(2)$ is trivial. Implication $(2)\Rightarrow(3)$ follows by Theorem \ref{hoch}, while $(3)\Rightarrow(4)$ follows by Corollary \ref{thm:ps}. Finally $(4)\Rightarrow(5)$ is \cite[Proposition 1]{Bo} and $(5)\Rightarrow(1)$ follows from \cite[Proposition 0.6.]{C-B3}.
\end{proof}

\begin{remark}\label{infi}
Let $A$ be an infinite-dimensional finitely generated algebra over $k.\,$ Let us check the following chain of implications
\[
(1)\Rightarrow (5) \Rightarrow (3)  \Rightarrow (4) \qquad\text{but}\qquad (1)\nLeftarrow (5) \nLeftarrow (3) \nLeftarrow (4)
\]
where the notations are the same of Theorem \ref{equiv}.

$(1)\Rightarrow (5).$ This is  \cite[Proposition 5.1]{C-Q}.

$(1)\nLeftarrow (5).$ Consider $A^1 = \CC[x,\delta]/<x\delta - \delta x =1> $ the first Weyl algebra. It is an example of a hereditary but not formally smooth algebra, since it can be proved that $\mathrm{H}^2(A,A^e)\neq 0\,$ (see \cite[Proposition 3]{VdB1}). This is due to William Crawley-Boevey (personal communication).

$(5)\Rightarrow (3).$ This is trivial.

$(5)\nLeftarrow (3).$ Let $U:=U(\mathfrak{g})$ be the universal enveloping algebra of a semisimple Lie algebra $\mathfrak{g}$. \emph{Whitehead's second lemma} (see e.g. \cite[Corollary 7.8.12, page 246]{We}) asserts that, in characteristic zero, $\mathrm{H}%
_{Lie}^{2}\left( \mathfrak{g},N\right) =0$ for every $\mathfrak{g}$-module $%
N $ of finite-dimension over $k $. In particular, for every $M\in A\amodf$, we obtain $\mathrm{H}_{Lie}^{2}\left( \mathfrak{g},\mathrm{End}_{k }(M)\right) =0$. By \cite[Exercise 7.3.5, page 226]{We}, we have
that
\begin{equation*}
\mathrm{H}_{Lie}^{\ast }\left( \mathfrak{g},\mathrm{End}_{k }\left(
M\right) \right) \cong \mathrm{Ext}_{U}^{\ast }\left(M ,M\right) .
\end{equation*}%
Therefore, $\mathrm{Ext}^2_U\left( M,M\right)=0$.
If $\mathfrak{g}$ is finite-dimensional, then $U$ is finitely generated and, thus, $U$ is \ps.
Now, $U$ has global dimension $\dim _{k }\left( \mathfrak{g}\right) ,$ see \cite[Exercise 7.7.2, page 241]{We}, and we are done.

$(3)\Rightarrow (4).$ This is Corollary \ref{thm:ps}.

$(3)\nLeftarrow (4).$ Remark \ref{calabi} shows that there might exist regular points $M$ in $\ran$ with  $\Ext_A^2(M,M)\neq 0. $
\end{remark}

\begin{remark}
The implication $(1)\Rightarrow (4)$ was already known, see \cite[Proposition 19.1.4.]{G1}, \cite[Prop.6.3.]{LB}.

Remark \ref{infi} and the argument on $U=U(\mathfrak{g}),$ contained in the proof thereby, together imply smoothness of $\mathrm{Rep}_{U}^{n}.$ This result was known, see e.g.  the comment by Le Bruyn in \cite{LB-forum}.
\end{remark}

\subsection{Unobstructed Algebras}
We now list some examples and results in case $A$ is finitely generated but not necessarily finite-dimensional.

In the remaining part of the section $k$ can be any field.

\begin{example}\label{ex:U}
We have seen in the Remark \ref{infi} that $U(\mathfrak{g})$ is \ps\  for a semisimple Lie algebra $\mathfrak{g}$.

More generally, in \cite[Theorem 0.2]{Zu}, there is a characterization of
all finite-dimensional Lie algebras $\mathfrak{g}$ over a field $k $ of
characteristic zero such that their second cohomology with coefficients in any
finite-dimensional module vanishes. Such a Lie algebra is one of the
following: (i) a one-dimensional Lie algebra; (ii) a semisimple Lie algebra;
(iii) the direct sum of a semisimple Lie algebra and a one-dimensional Lie
algebra. Note that a one-dimensional Lie algebra $\mathfrak{g}$ is not
semisimple as $[\mathfrak{g},\mathfrak{g}]=0\neq \mathfrak{g}$ (cf. \cite[%
Corollary at page 23]{Hu}). The same argument as above shows that the universal enveloping algebras of all of these Lie algebras are \ps.
\end{example}

The proof of the following result is analogous to \cite[Proposition 5.3(4)]{C-Q}. %For the reader sake, we include it here.

\begin{proposition}\label{pro:tens}
 Let $A$ and $S$ be \ps{} algebras over a field $k$. If $\Ext_S^1(M,M)=0$ for every $M\in S\amodf$, then $S\otimes A$ is \ps.
\end{proposition}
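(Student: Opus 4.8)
The plan is to prove that $S\otimes A$ is \ps{} by showing $\Ext^2_{S\otimes A}(M,M)=0$ for every finite-dimensional $(S\otimes A)$-module $M$. By Theorem \ref{hoch}, this is equivalent to the vanishing of the Hochschild cohomology $\hoch^2(S\otimes A,\End_k(M))$. The natural tool is the K\"unneth-type formula for Hochschild cohomology of a tensor product: for algebras $S$ and $A$ over a field $k$ and a suitable bimodule, one has a decomposition
\begin{equation*}
\hoch^n(S\otimes A,M)\cong\bigoplus_{p+q=n}\hoch^p(S,-)\otimes\hoch^q(A,-)
\end{equation*}
with coefficients in the appropriate pieces, provided the relevant modules are flat (automatic over a field). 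The hypotheses are tailored precisely so that the three summands contributing to $\hoch^2$ all vanish.

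First I would reduce to the case of a finite-dimensional $M$, since \ps{} is a condition on $\ran(k)$ and every such module is finite-dimensional over $k$. Then I would regard $\End_k(M)$ as an $(S\otimes A)^e$-bimodule and, via the envelope identification $(S\otimes A)^e\cong S^e\otimes A^e$, apply the K\"unneth spectral sequence (or the explicit K\"unneth formula, as in the cited \cite[Proposition 5.3(4)]{C-Q}) to decompose $\hoch^2(S\otimes A,\End_k(M))$ into contributions indexed by $(p,q)$ with $p+q=2$. The three terms correspond to $(p,q)\in\{(2,0),(1,1),(0,2)\}$. The term $(0,2)$ involves $\hoch^2(A,-)$, which vanishes because $A$ is \ps{} (after checking the coefficient bimodule is finite-dimensional, so that the relevant $\Ext^2_A$ vanishes via Theorem \ref{hoch}). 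Symmetrically, the $(2,0)$ term involves $\hoch^2(S,-)$ and vanishes because $S$ is \ps.

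The genuinely delicate summand is the mixed term $(1,1)$, which involves a product of $\hoch^1(S,-)$ and $\hoch^1(A,-)$ with coefficients in pieces of $\End_k(M)$. This is exactly where the extra hypothesis $\Ext^1_A(M,M)=0$ for every $M\in S\amodf$ enters: it forces the $A$-factor $\hoch^1(A,-)\cong\Ext^1_A(-,-)$ of the mixed term to vanish on the finite-dimensional modules obtained by restricting $M$ along $S\hookrightarrow S\otimes A$. The main obstacle I anticipate is bookkeeping the coefficient modules correctly through the K\"unneth identification: one must verify that the bimodule pieces appearing in each summand are genuinely finite-dimensional $A$- (resp.\ $S$-) bimodules of the form $\End_k(N)$ for appropriate restrictions $N$ of $M$, so that the \ps{} hypotheses and the $\Ext^1$ hypothesis actually apply termwise. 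Once the three summands are seen to vanish, we conclude $\hoch^2(S\otimes A,\End_k(M))=0$, hence $\Ext^2_{S\otimes A}(M,M)=0$, and therefore $S\otimes A$ is \ps.
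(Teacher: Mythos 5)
Your route is genuinely different from the paper's --- the paper proves this proposition by a Cuntz--Quillen-style lifting of algebra maps through Hochschild extensions (it is modeled on \cite[Proposition 5.3(4)]{C-Q} and uses Proposition \ref{pro:lifting}), not by a K\"unneth decomposition --- but as written your plan has a genuine gap, and it sits exactly at the step you defer as ``bookkeeping''. A first, fixable, point: the direct-sum K\"unneth formula requires the coefficient bimodule to factor as an outer tensor product over $(S\otimes A)^e\cong S^e\otimes A^e$, and $\End_k(M)$ is of that form only when $M$ itself is an outer tensor product of an $S$-module and an $A$-module, which a general finite-dimensional $S\otimes A$-module is not; the correct tool for arbitrary coefficients is the Cartan--Eilenberg spectral sequence
\begin{equation*}
E_2^{p,q}=\hoch^p\bigl(A,\hoch^q(S,\End_k(M))\bigr)\ \Longrightarrow\ \hoch^{p+q}\bigl(S\otimes A,\End_k(M)\bigr),
\end{equation*}
obtained from separate bimodule resolutions of $A$ and $S$. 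The essential problem appears once the terms are identified correctly. By Theorem \ref{hoch} the inner cohomologies are $\hoch^q(S,\End_k(M))\cong\Ext^q_S(M,M)$, so $E_2^{0,2}=0$ because $S$ is \ps{} and $E_2^{1,1}=\hoch^1(A,\Ext^1_S(M,M))=0$ by the $\Ext^1$-hypothesis (which, as printed, does not typecheck: a module $M\in S\amodf$ is not an $A$-module; comparison with Example \ref{psex} shows the intended hypothesis is $\Ext^1_S(M,M)=0$ for all $M\in S\amodf$, and your reading of the mixed term inherits this confusion). Since also $E_2^{0,1}=0$, the spectral sequence degenerates to an isomorphism
\begin{equation*}
\hoch^2\bigl(S\otimes A,\End_k(M)\bigr)\cong E_2^{2,0}=\hoch^2\bigl(A,\End_S(M)\bigr),
\end{equation*}
where $\End_S(M)=\hoch^0(S,\End_k(M))$ is the centralizer of the $S$-action. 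This corner term is the entire content of the proposition, and your hypotheses do not reach it: \ps{} for $A$ gives $\hoch^2(A,\End_k(V))=0$ for finite-dimensional $A$-modules $V$, but the $A$-bimodule $\End_S(M)$ is a subbimodule of $\End_k(M)$ which in general is \emph{not} of the form $\End_k(V)$ for any $V$. So the verification you postpone (that each piece is ``$\End_k$ of an appropriate restriction of $M$'') actually fails at the corner; swapping the roles of $S$ and $A$ in the spectral sequence makes matters worse, since then the mixed term survives as well.

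It is worth noting that the paper's lifting argument must face the same residual term: after lifting $s\mapsto s\otimes 1_A$ (possible since $\Ext^2_S(M,M)=0$) and passing to $S$-invariants of the extension (exact because $\Ext^1_S(M,M)=0$ --- this, not a mixed K\"unneth term, is the real role of the extra hypothesis there), the final lifting of $a\mapsto 1_S\otimes a$ requires $\hoch^2(A,\End_k(M)^S)=\hoch^2(A,\End_S(M))=0$. The missing idea that closes the gap, in either approach, is this: $\Ext^1_S(V,V)=0$ for all $V\in S\amodf$ forces every finite-dimensional $S$-module to be semisimple, hence the image of $S$ in $\End_k(M)$ is a finite-dimensional semisimple algebra, which over a perfect field (in particular in characteristic zero) is separable; its separability idempotent acts on $\End_k(M)$ by operators commuting with the $A$-bimodule structure, and therefore splits off $\End_S(M)$ as an $A$-bimodule direct summand of $\End_k(M)$. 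Consequently $\hoch^2(A,\End_S(M))$ is a direct summand of $\hoch^2(A,\End_k(M))\cong\Ext^2_A(M,M)=0$. With this supplement your spectral-sequence strategy does go through and is a legitimate alternative to the paper's proof; without it, the three-term vanishing plan cannot be completed.
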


\begin{remark} \
Since $gd\, k[x_1,\dots,x_n]=n,$ from Remark \ref{infi} it follows that the algebra $k[x_1,\dots,x_n]$ is not formally smooth for $n > 1.$ \\
In general, the tensor product of two formally smooth algebras is not formally smooth. Indeed, in the setting of Proposition \ref{pro:tens}, if both $A$ and $S$ are finitely generated algebras over $k$, then, by \cite[Proposition 7.4]{C-E}, we have $\pd(S\otimes A)=\pd(S)+\pd(A)$, where $\pd(\Lambda)$ denotes the projective dimension of a $k$-algebra $\Lambda$ regarded as a bimodule over itself. Since $\pd(A)\leq n$ if and only if $\hoch^{n+1}(A,N)=0$ for every $N\in A^e\amod$, we get that the algebra $S\otimes A$ is not formally smooth unless $\pd(S)+\pd(A)\leq 1$ i.e. unless $S$ and $A$ are both formally smooth and at least one of them is separable.
\end{remark}

By using Proposition \ref{pro:tens}, we can give new examples of algebras whose associated representation scheme is smooth.

\begin{example}\label{psex}

1) Let $A$ be a \ps{} algebra and let $S$ be a separable algebra (see \cite[above Proposition 3.2]{C-Q}), that is  $\hoch^i (S, N)=0$ for every $i>0$ and for every $S$-bimodule $N$. By Theorem \ref{hoch}, we get $\mathrm{Ext}_S^i (M, M)=0$ for every $i>0$ and for every $M\in S\amod$. By Proposition \ref{pro:tens}, we get that $S\otimes A$ is \ps. As a particular case, when $\mathrm{char}(k)=0,$ we have that $M_n(A)\cong M_n(k)\otimes A$ and the group $A$-ring $A[G]\cong k[G]\otimes A $, for every finite group $G$, are \ps{} as the matrix ring $M_n(k)$ and the group algebra $k[G]$ are separable in characteristic zero (see \cite[Example of page 271]{C-Q}).

2) Let $A$ be \ps  \ algebra and $S$ a separable algebra, then $\mathrm{Rep}_{A \otimes S}^n$ is smooth. This follows from Proposition \ref{pro:tens} and example 1).

3) Let $\mathfrak{g}$ be a semisimple Lie algebra and assume $\mathrm{char}(k)=0$. As observed in Example \ref{ex:U}, $U:=U(\mathfrak{g})$ is \ps. Moreover Whitehead's first lemma \cite[Corollary 7.8.10]{We} ensures that $\hoch_{Lie}^{1}\left( \mathfrak{g},N\right) =0$ for every $\mathfrak{g}$-module $N $ of finite dimension over $k $ so that, by the same argument used in Example \ref{ex:U} for the second group of cohomology, we obtain $\mathrm{Ext}_U^1(M,M)=0$, for every $M\in U\amodf$. Thus, by Proposition \ref{pro:tens}, we get that $U\otimes A$ is \ps{} if $A$ is.

4) In analogy with \cite[Proposition 5.3(5)]{C-Q}, we have that the direct sum of \ps{} algebras is \ps{} too.
\begin{comment}
  In fact, by Theorem \ref{hoch}, we have to check that $\mathrm{H}^2 (A, \End _{k} (M))=0$ for every  $A^e$-module $M$ which is finite-dimensional over $k$. Now, in view of \cite[Theorem 5.3, page 173]{C-E}, we have $\mathrm{H}^2 (A, \End _{k} (M))\cong \mathrm{H}^2 (A_1, e_1\End _{k} (M)e_1)+\mathrm{H}^2 (A_2, e_2\End _{k} (M)e_2)$ where $e_1=(1,0),e_2=(0,1)$. Note that, for $i=1,2$, $e_i\End _{k} (M)e_i\cong\End _{k} (e_iM)$ and $e_iM$ is a subspace of $M$ whence finite-dimensional. Thus $\mathrm{H}^2 (A_i, e_i\End _{k} (M)e_i)\cong\mathrm{H}^2 (A_i, \End _{k} (e_iN))=0$ as $A_i$ if \ps.
\end{comment}
\end{example}

\begin{lemma}\label{lem:Omegaproj}
 Assume that $\hoch^2(A,N)=0$ for some $N\in A^e\amod$ and let $\Omega^1_A$ be as in Definition \ref{fsm}. Then $\Omega^1_A$ is projective with respect to any surjective morphism of $A^e$-modules with kernel $N$.
\end{lemma}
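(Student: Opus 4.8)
The plan is to reduce the relative projectivity of $\Omega^1_A$ to the vanishing of a single $\Ext^1$-group, and then to compute that group by a dimension shift along the fundamental exact sequence defining $\Omega^1_A$. Concretely, recall that $\Omega^1_A$ sits in the short exact sequence of $A^e$-modules
\begin{equation*}
0 \to \Omega^1_A \to A\otimes A \xrightarrow{\mu} A \to 0,
\end{equation*}
where $\mu$ is the multiplication and $A\otimes A$ is the free $A^e$-module of rank one, hence $A^e$-projective.

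First I would unwind the statement. Given a surjection $g:E\to C$ of $A^e$-modules with $\ker g\cong N$, to say that $\Omega^1_A$ is projective relative to $g$ means that every $A^e$-linear map $\phi:\Omega^1_A\to C$ admits a lift $\widetilde\phi:\Omega^1_A\to E$ with $g\circ\widetilde\phi=\phi$. Applying $\Hom_{A^e}(\Omega^1_A,-)$ to $0\to N\to E\xrightarrow{g} C\to 0$ produces the exact sequence
\begin{equation*}
\Hom_{A^e}(\Omega^1_A,E)\xrightarrow{g_*}\Hom_{A^e}(\Omega^1_A,C)\to \Ext^1_{A^e}(\Omega^1_A,N),
\end{equation*}
so the lifting property for $g$ holds precisely when $g_*$ is surjective; it is therefore enough to prove $\Ext^1_{A^e}(\Omega^1_A,N)=0$, and this single bound is uniform over all surjections with kernel $N$.

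Next I would carry out the dimension shift. Applying $\Hom_{A^e}(-,N)$ to the fundamental sequence and using that $A\otimes A$ is $A^e$-projective, the long exact $\Ext$-sequence collapses to isomorphisms $\Ext^i_{A^e}(\Omega^1_A,N)\cong \Ext^{i+1}_{A^e}(A,N)$ for $i\geq 1$. In degree $i=1$ this gives $\Ext^1_{A^e}(\Omega^1_A,N)\cong \Ext^2_{A^e}(A,N)$. Finally I would invoke the standard identification of Hochschild cohomology with $\Ext$ over the enveloping algebra, $\Ext^n_{A^e}(A,N)\cong\hoch^n(A,N)$, to conclude $\Ext^1_{A^e}(\Omega^1_A,N)\cong\hoch^2(A,N)=0$ by hypothesis.

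The argument is essentially bookkeeping, so there is no deep obstacle; the one point demanding care is the meaning of ``projective with respect to'' and the observation that $\Ext^1_{A^e}(\Omega^1_A,N)=0$ controls the lifting property against \emph{all} surjections with kernel $N$ simultaneously, not merely one fixed extension. Alternatively, one can avoid explicit $\Ext$-machinery and mirror the paper's cocycle approach by translating the hypothesis $\hoch^2(A,N)=0$ directly into the lifting statement of Proposition \ref{pro:lifting}; but the dimension-shift route is shorter and makes transparent why this lemma is the single-module refinement of the equivalence (ii)$\Leftrightarrow$(iii) in Definition \ref{fsm}.
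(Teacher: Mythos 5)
Your proof is correct and follows essentially the same route as the paper's: a dimension shift along $0\to\Omega^1_A\to A\otimes A\to A\to 0$ giving $\Ext^1_{A^e}(\Omega^1_A,N)\cong\Ext^2_{A^e}(A,N)=\hoch^2(A,N)=0$, combined with the long exact sequence of $\Ext^*_{A^e}(\Omega^1_A,-)$ applied to an arbitrary surjection with kernel $N$. The only difference is the order of the two steps, which is immaterial.
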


\begin{proof} One gets that $\mathrm{Ext}^1_{A^e}(\Omega^1_A,N)\cong\mathrm{Ext}^2_{A^e}(A,N)=\hoch^2(A,N)=0$ analogously to \cite[Proposition 3.3]{C-Q}. The conclusion follows by applying the long exact sequence of $\mathrm{Ext}^*_{A^e}(\Omega^1_A,-)$ to the exact sequence formed by any surjective morphism of $A^e$-modules and its kernel $N$.
\begin{comment}
  Set $\Omega:=\Omega^1_A$. If we write the long exact sequence of $\mathrm{Ext}^*_{A^e}(-,N)$ obtained from the short exact sequence $0\to \Omega\to A^e\to A\to 0$, and use that $\mathrm{Ext}^*_{A^e}(A^e,N)=0$, we easily get that $\mathrm{Ext}^1_{A^e}(\Omega,N)\cong\mathrm{Ext}^2_{A^e}(A,N)=\hoch^2(A,N)$. Therefore $\mathrm{Ext}^1_{A^e}(\Omega,N)=0$. Consider now a short exact sequence of $A^e$-modules of the form $0\to N\to P\overset{p}{\to} Q\to 0$ and let $f:\Omega\to Q$ be any morphism of $A^e$-modules
\begin{equation*}
 \xymatrixrowsep{15pt}
\xymatrix@C=30pt{&&&\Omega\ar^{f}[d]\ar@{.>}_{h}[dl]\\ 0 \ar[r] & N\ar[r]&P \ar^-{p}[r]   & Q\ar[r] &0  }
\end{equation*}
The latter exact sequence is turned into the exact sequence $$0\to \mathrm{Hom}_{A^e}(\Omega,N)\to\mathrm{Hom}_{A^e}(\Omega,P)\overset{p^*}{\to}\mathrm{Hom}_{A^e}(\Omega,Q)\to\mathrm{Ext}^1_{A^e}(\Omega,N)=0$$ where $p^*:=\mathrm{Hom}_{A^e}(\Omega,p):h\mapsto p\circ h$. Thus $p^*$ is surjective i.e. there is a morphism of $A^e$-modules $h:\Omega\to P$ such that $p\circ h=f$.
\end{comment}
\end{proof}

The proof of the following result is similar to \cite[Proposition 5.3(3)]{C-Q}.

\begin{proposition}\label{pro:tensAlg}
Let $A$ be a \ps{} algebra over a field $k$. Then the tensor algebra $T_A(P)$ is \ps{} for every $P\in  A^e\amod$ which is projective with respect to any surjective morphism in $A^e\amod$ with kernel $\End _{k} (M)$ for every $M\in A\amodf$.
\end{proposition}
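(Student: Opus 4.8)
The plan is to combine the lifting criterion of Proposition~\ref{pro:lifting} with the universal property of the tensor algebra, in the spirit of \cite[Proposition 5.3(3)]{C-Q}. Write $B:=T_A(P)$. By Theorem~\ref{hoch} it suffices to prove $\hoch^2(B,\End_k(M))=0$ for every $M\in B\amodf$, and by Proposition~\ref{pro:lifting} this amounts to the following lifting property: for every algebra map $f:B\to C$ and every Hochschild extension $p:E\to C$ with kernel $N'$ satisfying ${_f N'_f}=\End_k(M)$, there is an algebra lift $\overline{f}:B\to E$ with $p\circ\overline{f}=f$. So I would fix such data and construct $\overline{f}$.

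First I would invoke the universal property of $B=T_A(P)$: giving $f$ is the same as giving the algebra map $\phi:=f|_A:A\to C$ together with the $A$-bimodule map $\psi:=f|_P:P\to{_\phi C_\phi}$. Restricting the coefficient bimodule along $A\hookrightarrow B$ identifies ${_\phi N'_\phi}=\End_k(M_A)$, where $M_A$ is $M$ viewed as an $A$-module; since $M$ is finite-dimensional over $k$, so is $M_A$. As $A$ is \ps{}, Theorem~\ref{hoch} gives $\hoch^2(A,\End_k(M_A))=\Ext^2_A(M_A,M_A)=0$, so Proposition~\ref{pro:lifting} produces an algebra lift $\overline{\phi}:A\to E$ with $p\circ\overline{\phi}=\phi$.

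Then I would observe that $\overline{\phi}$ makes $E$ an $A$-bimodule, and that with respect to these structures $p:E\to C$ is a surjection in $A^e\amod$ whose kernel $N'\cong\End_k(M_A)$ is exactly of the type appearing in the hypothesis on $P$. Hence $P$ is projective relative to $p$, and the $A$-bimodule map $\psi$ lifts to an $A$-bimodule map $\overline{\psi}:P\to{_{\overline{\phi}}E_{\overline{\phi}}}$ with $p\circ\overline{\psi}=\psi$. Feeding the pair $(\overline{\phi},\overline{\psi})$ back through the universal property yields an algebra map $\overline{f}:B\to E$ restricting to $\overline{\phi}$ on $A$ and to $\overline{\psi}$ on $P$; comparing $p\circ\overline{f}$ with $f$ on these generators and using uniqueness gives $p\circ\overline{f}=f$, which completes the lifting and hence the proof.

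The routine part is the bookkeeping of bimodule structures: checking that ${_\phi N'_\phi}=\End_k(M_A)$, that $p$ becomes an $A^e$-morphism once $E$ is an $A$-bimodule via $\overline{\phi}$, and that the restriction $M_A$ stays finite-dimensional over $k$. The conceptual point I expect to be decisive is that the universal property of $T_A(P)$ splits the single obstruction in $\hoch^2(B,\End_k(M))$ into two independent lifting problems: the algebra lift over $A$, killed by the \ps{} hypothesis on $A$, and the bimodule lift for $P$, killed by the relative projectivity hypothesis. No residual obstruction survives this decomposition, so each hypothesis is used exactly once and the argument closes.
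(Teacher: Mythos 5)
Your proposal is correct and follows essentially the same route as the paper's proof: lift the algebra map on $A$ via Proposition~\ref{pro:lifting} using that $A$ is \ps{}, use the resulting $A$-bimodule structure on $E$ to invoke the relative projectivity of $P$ and lift the bimodule part, then assemble the two lifts through the universal property of $T_A(P)$. The only (cosmetic) difference is that you verify the lifting criterion for an arbitrary algebra map $f:T_A(P)\to C$, whereas the paper specializes to $f=\id_{T_A(P)}$ and shows directly that every Hochschild extension of $T_A(P)$ with kernel $\End_k(M)$ splits.
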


\begin{comment} Let $M$ be an arbitrary left $T_A(P)$-module which is finite-dimensional over $k$ and set $N=\End _{k} (M)$. It suffices to prove that any Hochschild extension of $T_A(P)$ with kernel $N$ is trivial. Consider such an extension:
\begin{equation*}
 \xymatrixrowsep{15pt}
\xymatrix@C=30pt{&&& A\ar^{\sigma_A}[d]\ar@{.>}_{\overline{\sigma_A}}[dl]
\\ 0 \ar[r] & N\ar[r]&E \ar@<0.5ex>^-(.7){p}[r]   & T_A(P)\ar[r]\ar@<0.5ex>@{..>}^-(.3){\sigma}[l] &0
\\ &&&P\ar_{\sigma_P}[u]\ar@{.>}^{\overline{\sigma_P}}[ul]}
\end{equation*}
Since $A$ is \ps, by Proposition \ref{pro:lifting}, we have a lifting $\overline{\sigma_A}$ of the canonical inclusion $\sigma_A$. By means of $\overline{\sigma_A}$ we have that $E$ becomes an $A^e$-module and $p$ an $A^e$-module map. Since $M$ is also a left $A$-module through $\sigma_A$, then the projectivity of $P$ yields an $A^e$-module map $\overline{\sigma_P}$, as in the diagram above, such that $p\circ \overline{\sigma_P}=\sigma_P$, the latter being the canonical inclusion of $P$. Now, by the universal property of the tensor algebra, there is a unique algebra map $\sigma:T_A(P)\to E$ such that $\sigma\circ\sigma_A=\overline{\sigma_A}$ and $\sigma\circ\sigma_P=\overline{\sigma_P}$. It is easy to check that $p\circ\sigma=\id_{T_A(P)}$.
\end{comment}

\begin{example}Let $A$ be a \ps{} algebra over a field $k$. By Theorem \ref{hoch}, Lemma \ref{lem:Omegaproj} and Proposition \ref{pro:tensAlg}, we have that $T_A(\Omega^1_A)$ is \ps. The latter,
by \cite[Proposition 2.3]{C-Q}, identifies with $\Omega A,$ the DG-algebra of noncommutative
differential forms on $A.$
\end{example}

\section{Deformations}\label{defsmooth}
In this section we would like to analyze the relationships between the results of Section \ref{harri} and the theory of deformations of module structures.

\begin{definition}
Let $M \in \ran (k)$ and let  $\mu:A\to\End_k(M)$ be the associated linear representation. For $(R,\mathfrak{m})$  a local commutative $k$-algebra, an $R$-{\it
deformation}  of $M $ is an element $\widetilde{M}\in \ran(R) $ whose associate linear representation  $\widetilde{\mu }:  A\to \End_R(\widetilde{M})$ verifies $\alpha \circ \widetilde{\mu} = \mu$
where $\alpha :\End_R(\widetilde{M}) \to \End_k(M) $ is the morphism of $k$-algebras induceded by the projection $R\to R/\mathfrak{m}\cong k$.\\
When $R=k[\epsilon]:=k[t]/(t^2),$ the ring of dual numbers or $R=k[[t]],$ the ring of formal power series, then an $R$-deformation will be called \textit{infinitesimal} or \textit{formal}, respectively.
\end{definition}
For the general theory on deformations of finite-dimensional modules see  \cite{Ge} and \cite{Ger}.

\begin{remark}\label{obstructions}
It is well-known that the obstructions in extending the infinitesimal deformations of $M$ to formal deformations are in $\Ext^2_{A}(M,M)\,$ (see for example \cite[3.6. and 3.6.1.]{Ge}).
\end{remark}

The theory of local and global deformations of algebraic schemes is an ample and well-established domain of modern algebraic geometry.  Sernesi wrote an excellent treatise on this topic \cite{Se}, and we address the interested reader to it.

We just recall some facts we need to develop our analysis .\\
Let $X$ be a scheme over $ k,$  let $x \in X(k)$ be a $k$-point of $X$ and let $(R,\mathfrak{m})$ be a local commutative $k$-algebra.
\begin{definition}
An $R$-deformation of $X$ at $x$ is an $R$-point $x_R$ of $X$ such that the restriction $\mathrm{Spec}\, k\to \mathrm{Spec}\,R$ maps $x$ to $x_R.$
When $R=k[\epsilon]$  or $R=k[[t]]$, then an $R$-deformation will be called \textit{infinitesimal} or \textit{formal}, respectively.
\end{definition}

\begin{lemma}\label{locdef}
Let $R\in\calg_k$ and let $x:R\to k$ be a rational point of $X=\mathrm{Spec}\, R.$ Then, for all local $S\in\calg_k,$  there is a bijection
\[
\{S-{\mbox{deformations of }}X \mbox{ at } x\}\cong
\Hom_{\calg_k}(R_x, S)
\]
where $R_x$ denotes the localization of $R$ at $\mathfrak{m}_x:=\ker x.$
\end{lemma}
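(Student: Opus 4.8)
The plan is to reduce the statement to the universal property of localization. Since $X=\mathrm{Spec}\,R$ is affine, an $S$-point of $X$ is the same as a $k$-algebra map $g:R\to S$, and an $S$-deformation of $X$ at $x$ is precisely such a $g$ whose composite with the residue projection $\pi:S\to S/\mathfrak{m}_S=k$ equals $x$. This is the algebraic translation of the requirement that restricting $x_S$ along the closed point $\mathrm{Spec}\,k\to\mathrm{Spec}\,S$ recovers $x$; here I use that $S$, being the local $k$-algebra carrying a deformation of the $k$-point $x$, has residue field $k$. Thus I must exhibit a bijection
\[
\{g\in\Hom_{\calg_k}(R,S):\pi\circ g=x\}\;\cong\;\Hom_{\calg_k}(R_x,S).
\]

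First I would invoke the universal property of localization. Because $\mathfrak{m}_x=\ker x$ is maximal (indeed $x$ induces $R/\mathfrak{m}_x\cong k$), hence prime, the complement $T:=R\setminus\mathfrak{m}_x$ is multiplicatively closed and $R_x=T^{-1}R$. Precomposition with the localization map $\lambda:R\to R_x$ then identifies $\Hom_{\calg_k}(R_x,S)$ with the set of those $g\in\Hom_{\calg_k}(R,S)$ satisfying $g(T)\subseteq S^\times$. So it suffices to prove the equality of subsets
\[
\{g:\pi\circ g=x\}\;=\;\{g:g(T)\subseteq S^\times\}
\]
inside $\Hom_{\calg_k}(R,S)$.

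The key leverage is that $S$ is local, so $S^\times=S\setminus\mathfrak{m}_S$. For the inclusion $\subseteq$, if $\pi\circ g=x$ and $r\in T$, then $\pi(g(r))=x(r)\neq 0$, so $g(r)\notin\mathfrak{m}_S$ and hence $g(r)\in S^\times$. For the reverse inclusion, suppose $g(T)\subseteq S^\times$. Then $\pi\circ g:R\to k$ is a $k$-algebra map which is the identity on the structure copy of $k$, hence surjective, so its kernel is maximal; moreover each $r\in T$ has $\pi(g(r))\in k^\times$, giving $T\cap\ker(\pi\circ g)=\varnothing$, that is $\ker(\pi\circ g)\subseteq\mathfrak{m}_x$. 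Two maximal ideals one contained in the other are equal, so $\ker(\pi\circ g)=\mathfrak{m}_x=\ker x$.

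Finally I would upgrade this equality of kernels to an equality of maps: both $\pi\circ g$ and $x$ factor as $k$-algebra isomorphisms $R/\mathfrak{m}_x\xrightarrow{\sim}k$ through the common quotient $R\to R/\mathfrak{m}_x$, and since every $k$-algebra endomorphism of $k$ is the identity, the two factorizations coincide, yielding $\pi\circ g=x$. Combining the localization bijection with this identification of subsets produces the asserted bijection. The only delicate point — and the sole place the hypotheses are genuinely used — is this last rigidity argument together with the locality of $S$; the remainder is the purely formal universal property of localization, so I do not anticipate a substantive obstacle.
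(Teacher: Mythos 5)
Your proof is correct and follows essentially the same route as the paper's: both reduce the statement to the universal property of localization, using locality of $S$ (so $S^\times = S\setminus\mathfrak{m}_S$) to show that the deformation condition $\pi\circ g = x$ is equivalent to $g$ inverting $R\setminus\mathfrak{m}_x$. If anything, you are slightly more careful than the paper at the final step, where you explicitly upgrade the equality of kernels $\ker(\pi\circ g)=\ker x$ to the equality of maps $\pi\circ g = x$ via the rigidity of $k$-algebra endomorphisms of $k$, a point the paper's proof leaves implicit.
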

\begin{proof}
Let $\alpha: R\to S$ be such that $x=\pi_S\, \alpha,$ where $\pi_S:S\to S/\mathfrak{m}_S \cong k$ is the canonical projection with $\mathfrak{m}_S$ the maximal ideal of $S.$ Then if $a\in R-\ker x,$  it follows that $\alpha(a)$ is invertible in $S$ and, therefore, by universality, there is a unique morphism $\alpha_x:R_x\to S$ such that $\alpha_x\, j_R^x=\alpha$, where  $j_R^x:R\to R_x$ is the canonical map, and hence $x=\pi_S\,\alpha_x\, j^x_R$.\\
On the other hand, given a morphism $\beta:R_x\to S$ one has that $x=\pi_S\, \beta\, j_R^x$ thus giving the unique $S$-deformation $\beta\, j_R^x$ of $X$ at $x.$ It is, indeed, trivial that $\ker x\subset \ker (\pi_S\, \beta\, j_R^x).$
If $r\in \ker (\pi_S\, \beta\, j_R^x)$ then $\beta(j_R^x(a))\in \mathfrak{m}_S$ and, therefore, $j_R^x(a)\in \mathfrak{m}_{R_x}$. Thus $a\in\ker x.$
\end{proof}

The adjunction in Lemma \ref{rep} gives the dictionary to describe deformations of $A$-modules in terms of deformations at points of $\ran.$
The following result complements Theorem \ref{thm:Hoch-Harr}.

\begin{proposition}
Let $A\in\alg_k$ be finitely generated. Let $M \in \ran (k)$ and let  $f:\van\to k$ be the associated point. Then $M$ is regular if and only if,
for all finite-dimensional local commutative $k$-algebras $S,T$, a surjective homomorphism of $k$-algebras $S\to T$ induces a surjection
\[
\{S-{\mbox{deformations of }}M\}\longrightarrow
\{T-{\mbox{deformations of }}M\}.
\]
\end{proposition}

\begin{proof}
This follows for example from \cite[Proposition, pag 151]{M}.
\end{proof}

\begin{remark}
Geiss and de la Pe\~{n}a proved that, if $A$ is a finite-dimensional algebra, then  $M\in \ran(k)$ is regular  if $\Ext^2_A(M,M)=0,$ see \cite{Ge,Ge-P}. A careful analysis of their argument shows that it is easy to adapt their proof if one supposes that $A$ is bimodule coherent in the sense of \cite[3.5.1]{G2}, since in this case each finite-dimensional A-module admits a projective resolution by finitely generated projectives. If A is finitely presented, an argument involving cones, similar to \cite[Lemma 4.3]{CBS}, allows to extend the above mentioned argument to this situation.
\end{remark}

\bigskip
\centerline{\textbf{Acknowledgement}}
We would like to thank Corrado De Concini, Victor Ginzburg and Edoardo Sernesi for hints and very useful observations. Galluzzi and Vaccarino warmly thank  Sandra Di Rocco for the invitation to the KTH Department of Mathematics. Our gratitude also goes to the referees for their thorough reports that helped us to improve an earlier version of our paper.
\bibliographystyle{amsplain}

\begin{thebibliography}{99}


\bibitem{Bock2} R.\ Bocklandt, \textit{Noncommutative Tangent Cones and Calabi Yau Algebras}, preprint (arXiv:0711.0179).

\bibitem{Bo} K.\ Bongartz, \textit{A geometric version of the Morita equivalence}.  J. Algebra  \textbf{139} (1991), 159-171.


\bibitem{C-E} H.\ Cartan and S. \ Eilenberg, \textit{ Homological Algebra}. Princeton University Press (1956).

\bibitem{C-B3} W.\ Crawley-Boevey, \textit{Preprojective algebras, differential operators and a Conze embedding for deformations of Kleinian singularities}. Comment. Math. Helv. \textbf{74} (1999), 548-574.

\bibitem{CBS} W.\ Crawley-Boevey and J.\ Schr\"{o}er, \textit{Irreducible components of varieties of modules}.
J. Reine Angew. Math. \textbf{553} (2002), 201-220.

\bibitem{C-Q} J.\ Cuntz   and D.\ Quillen, \textit{Algebra extension and non singularity}. J.  Amer.Math.Soc.  \textbf {8} (1995), 251-289.

\bibitem{Ga} P.\ Gabriel, \textit{Finite representation type is open}, Representations of algebras (Proc.
Ottawa, 1974), V. Dlab and P. Gabriel (eds.), Lecture Notes in Math. \textbf{488}, Springer-Verlag, 1975, pp. 132–155.

\bibitem{Ge} C.\ Geiss, \textit{Deformation Theory of finite-dimensional Modules and Algebras}. Lectures given at ICTP (2006).

\bibitem{Ge-P} C.\ Geiss, J.\ A.\ de la Pe\~{n}a, \textit{On the Deformation Theory of finite-dimensional Modules and Algebras}. Manuscripta Math. \textbf{88}  (1995),  191-208.

\bibitem{Ger} M.\ Gerstenhaber, \textit{On the deformation of rings and algebras}  Ann. of Math. \textbf{79}  (1964),  59-103.

\bibitem{G1} V.\ Ginzburg, \textit{Lectures on Non Commutative Geometry} (arXiv:math/0506603).

\bibitem{G2}  V.\ Ginzburg, \textit{Calabi-Yau algebras} (arXiv:math/0612139).

\bibitem{EGA4} A.\ Grothendieck, \textit{El\'ements de G\'eom\'etrie Alg\'ebrique IV. Etude locale des sch\'{e}mas et des morphismes de sch\'{e}mas}.  Inst. Hautes Etudes Sci. Publ. Math. \textbf{20} (1964).

\bibitem{Ha} D.\ K.\ Harrison, \textit{Commutative algebras and cohomology}. Trans. Amer. Math. Soc. \textbf{104} (1962) 191-204.

\bibitem{Hu} J.\ E.\ Humphreys, \textit{Introduction to Lie algebras and representation theory}. Graduate Texts in Mathematics \textbf{9}, Springer-Verlag, New York-Berlin, (1972).

\bibitem{LB} L.\ Le Bruyn, \textit{Noncommutative Geometry and Cayley-smooth Orders}. Pure and Applied Mathematics  \textbf{290}, Chapman and Hall (2007).

\bibitem{LB-forum} L.\ Le Bruyn, \url{http://mathoverflow.net/questions/9738}.

\bibitem{M} D.\ Mumford, \textit{Lectures on curves on an algebraic surface}. Annals of Mathematics Studies \textbf{59}, Princeton University Press, Princeton, N.J. (1966)

\bibitem{Pr}  C.\ Procesi,  \textit{Rings with Polynomial Identities} Lecture Notes in Pure and Appl. Math., Vol. \textbf{17}, Dekker, New York, (1979).

\bibitem{Se} E.\ Sernesi, \textit{Deformations of Algebraic Schemes}, Grundlehren der mathematischen Wissenschaften, \textbf{334},  Springer-Verlag, Berlin (2006).

\bibitem{V3} F.\ Vaccarino, \textit{Linear representations, symmetric products and the commuting scheme}. J.  Algebra. \textbf{317}  (2007), 634-641.

\bibitem {VdB1} M.\ Van den Bergh, \textit{A relation between Hochschild homology and cohomology for
Gorenstein rings}. Proc. Amer. Math. Soc. \textbf{126} (1998), 1345-1348.

\bibitem{We} C.\ Weibel, \textit{An introduction to homological algebra}. Cambridge Studies in Advanced Mathematics, \textbf{38}. Cambridge
University Press, Cambridge, 1994.

\bibitem{Zu} P.\ Zusmanovich, \textit{A converse to the second Whitehead lemma}. J. Lie Theory \textbf{18} (2008), no. 2, 295--299.

\end{thebibliography}

\bigskip

\begin{flushleft}

%{\bf AMS Subject Classification: 14J28, 14J50}\\[2ex]

Alessandro~Ardizzoni\\
Dipartimento di Matematica, Universit\`a di Torino, Via Carlo Alberto n.10, Torino, I-10123, ITALIA \\
e-mail: \texttt{alessandro.ardizzoni@unito.it}\\
URL: \url{sites.google.com/site/aleardizzonihome}\\[2ex]

Federica~Galluzzi\\
Dipartimento di Matematica, Universit\`a di Torino, Via Carlo Alberto n.10, Torino, 10123, ITALIA \\
e-mail: \texttt{federica.galluzzi@unito.it}\\[2ex]

Francesco~Vaccarino\\
Dipartimento di Scienze Matematiche, Politecnico di Torino, C.so Duca degli Abruzzi n.24, Torino, 10129, \ ITALIA \\
e-mail: \texttt{francesco.vaccarino@polito.it}\\
and\\
ISI Foundation, Via Alassio 11/c, 10126 Torino - Italy\\
e-mail: \texttt{vaccarino@isi.it}

\end{flushleft}

\end{document}